\theoremstyle{plain}
\newtheorem{lem}{Lemma}[section]
\newtheorem{thm}[lem]{Theorem}
\theoremstyle{definition}
\theoremstyle{remark}
\newtheorem{rem}{Remark}[section]
\begin{document}

\title{ \large\bf A novel variable-separation method based on sparse representation for stochastic partial differential equations}

\author{
Qiuqi Li\thanks{College of Mathematics and Econometrics, Hunan University, Changsha 410082, China. Email:qiuqili@hnu.edu.cn.}
\and
Lijian Jiang\thanks{Institute of Mathematics, Hunan University, Changsha 410082, China. Email: ljjiang@hnu.edu.cn. Corresponding author}
}

\date{}
\maketitle

\begin{center}{\bf ABSTRACT}
\end{center}\smallskip

In this paper, we propose a novel variable-separation (NVS) method for generic multivariate functions. The idea of  NVS is extended to
to obtain  the solution in  tensor product structure for stochastic partial differential equations (SPDEs).
 Compared with many widely used  variation-separation methods, NVS shares their merits but has  less computation complexity and  better efficiency.
 NVS can be used to get the separated representation of the solution for SPDE in a systematic enrichment manner.
 No iteration is performed at each enrichment step. This is a significant improvement compared with proper generalized decomposition. Because  the stochastic functions of the separated representations obtained by NVS depend on the previous terms, this impacts on the computation efficiency and brings great challenge for numerical simulation for  the problems in high stochastic dimensional spaces.
 In order to overcome the difficulty, we propose an improved least angle regression algorithm (ILARS) and a hierarchical sparse low rank tensor approximation (HSLRTA) method based on sparse regularization. For ILARS, we explicitly give the selection of the optimal regularization parameters at each step based on least angle regression algorithm (LARS) for lasso problems such that  ILARS is much more efficient.
 HSLRTA hierarchically decomposes a high dimensional problem into some low dimensional problems and brings  an accurate approximation for the solution to SPDEs in high dimensional stochastic spaces using limited computer resource.
 A few numerical examples are presented to illustrate the efficacy  of the proposed methods.

\smallskip
{\bf keywords:} Novel variable-separation , Sparse regularization, Improved least angle regression algorithm, Hierarchical sparse low rank tensor approximation

\section{Introduction}

Many model inputs (e.g., model coefficients and forcing terms) often contain some uncertainties because of lacking enough knowledge about the physical properties and  measurement noise.
%For example, quantum chemistry, kinetic theory descriptions of materials, the chemical master equation governing many biological processes and models of financial mathematics, and so on.
It is necessary to explore the uncertainty propagation for these models.  Thus uncertainty quantification is explosively growing in many branches of science and engineering.
These models involving uncertainty  can be often described by stochastic partial differential equations (SPDEs), precisely speaking, partial differential equations with random inputs.
Many numerical methods have been proposed to solve SPDEs in recent years. To predict the uncertainty propagation for the complex physical and engineering systems, spectral stochastic methods have been extensively investigated in last two decades (e.g., \cite{ghanem1999ingredients, matthies2008stochastic, nouy2009recent, beylkin2005algorithms}).
%These methods usually depend on the performance of the response which is approximated by a suitable set of basis functions of basic random variables, which parameterize  the  uncertainties.
Most of these methods use a suitable set of basis functions of basic random variables, which are independent of the models.
Many numerical methods have been proposed to compute the approximate solution, such as $L^2$ projection \cite{ghiocel2012stochastic, keese2003review},  Galerkin projections \cite{babuvska2005solving, frauenfelder2005finite, matthies2005galerkin},  regression \cite{blatman2008sparse} and stochastic interpolation \cite{babuvska2007stochastic, nobile2008sparse, xiu2007efficient, xiu2005high, ganapathysubramanian2007sparse}.

Galerkin spectral stochastic methods rely on a fruitful marriage of probability theory and approximation theory in functional analysis, which yield the accurate predictions and a better control on numerical simulations through a posteriori error estimation and adaptive approximation \cite{keese2004adaptivity, mathelin2007dual, wan2005adaptive, wan2009error}. When the physical model has high dimensional random inputs, numerical simulations are generally prohibitive with the above mentioned techniques. Moreover, a good knowledge of the mathematical structure of the physical model is required to produce predictions of the behavior of the stochastic problem.  In recent years, a proper generalized decomposition (PGD) method  has been  proposed for solving SPDEs \cite{nouy2007generalized, nouy2009recent, nouy2009generalized, nouy2010proper}, which can reduce  the above mentioned limitations of Galerkin spectral stochastic methods. This method is devoted to seek the approximation of the solution with the tensor product structure under the form
\begin{equation}
\label{vsf}
u(x,\bm{\xi}):=\sum_{i=1}^{N}\zeta_i(\bm{\xi})g_i(x),
\end{equation}
which allows  a priori computation of a quasi-optimal separated representation of the solution,
 %which has quite the same convergence properties as classical spectral decompositions,
where all the $g_i(x)$ are deterministic functions of the physical variables $x$ and the $\zeta_i(\bm{\xi})$
are functions of the  random variables $\bm{\xi}$. The main idea of the PGD method is devoted to constructing optimal reduced basis from a double orthogonality criterium  \cite{nouy2010proper}. The PGD method requires the solutions of a few uncoupled deterministic problems solved by classical deterministic solution techniques, and the solutions of stochastic algebraic equations solved by classical spectral stochastic methods.
However, PGD requires many iterations with the arbitrary initial guess to compute $\zeta_i(\bm{\xi})$ and $g_i(x)$ at each enrichment step $i$. This will
negatively effect on the simulation efficiency.  In this paper, we propose a novel variable-separation (NVS) method to get a separated  representation without the iteration at each enrichment step. Moreover, NVS can alleviate  the ``curse of dimensionality"  when dealing with problems in high stochastic dimension spaces.
In this work, we develop the strategy of NVS for generic multivariate function. NVS gives a variable-separation for a random field, and this can be   used to get an affine representation for model's inputs to achieve
offline-online computation decomposition, which is often desirable for uncertainty quantification of stochastic models.  Compared with classic variable-separation techniques such as  Empirical Interpolation Method (EIM),
NVS  shares the same merits as them, but it is much easier for implementation than those classic methods.
  In NVS, the optimal parameter values and interpolation nodes are not necessary. In addition, for the online computation of NVS, we can compute the approximation straightforwardly through   by the separated  representation instead of solving an algebraic system based on the optimal parameter values and interpolation nodes, which is necessary for EIM. NVS leads to very fast online computation. This is very crucial for the many-query context such as optimization, process control and inverse analysis.

Although NVS has some advantages over some classic variable-separation techniques, the stochastic function $\zeta_k(\bm{\xi})$  in (\ref{vsf}) obtained by NVS still depends on the previous functions $\{\zeta_i(\bm{\xi})\}_{i=1}^{k-1}$.
  This can effect on the computation efficiency and may bring  challenge for numerical simulation especially when the number of terms $N$  is large. To avoid this issue,
  we find a surrogate  for $\zeta_k(\bm{\xi})$ using  a suitable set of basis functions (e.g., polynomial chaos basis and radial basis functions) of the random variables.
  To this end, we propose two strategies in the paper, i.e., improved least angle regression algorithm (ILARS) and hierarchical sparse low rank tensor approximation  (HSLRTA) method, to get the approximation $\hat{\zeta}_k(\bm{\xi})$ of $\zeta_k(\bm{\xi})$ such that $\{\hat{\zeta}_i(\bm{\xi})\}_{i=1}^{N}$ are mutually independent.
It is known that the number of the effective basis functions may be small for many practical models \cite{rish2014sparse}. The optimization methods from compressive sensing  are used to extract the effective basis functions and obtain a sparse representation. There are roughly two classes of approaches to obtain the sparse representation: optimization based on $l_0$-norm and convex optimization  \cite{tropp2007signal, elad2009sparse, rish2014sparse}. The typical  methods of the $l_0$ optimization include  orthogonal matching pursuit (OMP) and iterative hard thresholding. The convex optimization based on $l_1$-norm includes least angle regression, coordinate descent and proximal methods. In this work, we  consider least angle regression (LARS) method for convex optimization based on $l_1$-norm. Julien Mairal described  the core of LARS in \cite{elad2009sparse} by seeking the solution such that a sub-gradient set contains the zero. The selection of the regularization parameter $\lambda$ is very important for  LARS method. The algorithm by Julien Mairal selected the new atoms by decreasing the value of regularization parameter $\lambda$, but the way how to decrease the value of regularization parameter is still not clear. As we know, if the step size of decreasing the value of $\lambda$ is too small, it substantially impacts on the computation efficiency. On the other hand, we may not find the non-zero coefficients exactly if the step size is too large. Therefore, it is crucial  to  select the optimal regularization parameter $\lambda$ at each step. In this work, we develop an improved least angle regression (ILARS), which explicitly gives the selection of the optimal regularization parameter $\lambda$ at each step. However, the dimension of  approximation spaces  drastically  increases with respect to dimension of random inputs, which makes it infeasible to get a good approximation of the model output $w(\bm{\xi})$ by ILARS directly when the dimension of random inputs is high because the computational cost becomes prohibitively expensive. To overcome the high dimensionality difficulty,  we propose a hierarchical sparse low rank tensor approximation, which
 is devoted to constructing an accurate approximation of random fields in a high dimensional stochastic space with  limited computation resource.

Low rank approximation methods have recently been applied to approximating functions in high dimensional tensor spaces \cite{hackbusch2012tensor, falco2012proper, grasedyck2013literature, khoromskij2012tensors}, and also have been used in several applications about uncertainty propagation \cite{nouy2007generalized, doostan2009least, nouy2010proper, jl2016, khoromskij2011tensor, matthies2008stochastic}. In the context of low rank approximation methods, the functions are approximated in  suitable low rank tensor subsets, which can give nice approximation properties for a large class of functions  in practical applications. In order to construct approximations in these tensor subsets, one usually uses least-squares methods based on sample evaluations of the function. Here, we adopt an alternative construction that involves sparse $l_1$-regularization with only a few function evaluations \cite{chevreuil2015least}. The sparse low rank tensor approximations (SLRTA) proposed in \cite{chevreuil2015least} requires a procedure of iterations at each step of sparse rank-one approximation with an initial guess. Here we extend the idea of NVS to SLRTA in order to avoid the iteration procedure at each step of sparse rank-one approximation. This provides an adaptive sparse low rank tensor approximations (ASLRTA) method, which achieves much better  efficiency than standard SLRTA. When the optimal rank $m$ is fairly large and the number of subsets of the random variables $\bm{\xi}$ is not small, ASLRTA may not give a good approximation of model output. For this situation, we introduce hierarchical sparse low rank tensor approximation (HSLRTA) method to cope with the challenge.
 The proposed HSLRTA method can provide high rank approximations for high dimensional stochastic problems.

The paper is structured as follows. In Section \ref{ssec:prelim},  we give some preliminaries and notations for the paper. Section \ref{apgd} is devoted to describing the details of NVS method. In Section \ref{regularization-ILARS}, we  introduce sparse regularization methods including  ILARS method. In Section \ref{hierarchical-low-rank-app}, we present the proposed ASLRTA and HSLRTA method based on sparse regularization methods . In section \ref{numerical examples},  a few  numerical examples are presented to illustrate the performance of all the proposed  methods. Finally, we make some conclusions and comments.
%%%%%%%%%%%%%%%%%%%%%%%%%%%%%%%%%%%%%%%%%%%%%%%

\section{Preliminaries and notations}
\label{ssec:prelim}

We consider a stochastic partial differential equation (SPDE) defined on a bounded physical domain (e.g., space or space-time domain) of the form
\begin{eqnarray}
\label{SPDE}
\begin{split}
\mathcal{L}(x,\bm{\xi})u(x,\bm{\xi})=f(x,\bm{\xi}), ~~~\forall ~x \in D, ~ \bm{\xi} \in \Omega,\\
\end{split}
\end{eqnarray}
where $\bm{\xi}:=(\xi_1, \cdots,\xi_{d})$ is a set of $d$ real-valued random variables, $\mathcal{L}(x,\bm{\xi})$ is a stochastic differential operator, $f(x,\bm{\xi})$ is the source term, and $u(x,\bm{\xi})$ is the  solution of SPDE. We introduce the associated finite dimensional probability space $(\Omega, \mathcal{B},P_{\bm{\xi}} )$, where $\Omega \in \mathbb{R}^d$ is the event space,  $\mathcal{B}$ is a $\sigma-$algebra on $\Omega$, and $P_{\bm{\xi}}$ is the probability measure. We note that the solution $u(x,\bm{\xi})$ of SPDE is a random field defined on the physical domain and takes values in a Hilbert space $\mathcal{V}$.

A weak formulation of (\ref{SPDE}) reads: find $u: \Omega \rightarrow \mathcal{V}$ such that
\begin{eqnarray}
\label{SPDE-strong-st}
\begin{split}
a\big(u(\bm{\xi}), v; \bm{\xi}\big)=b(v;\bm{\xi}), ~~~\forall ~v \in \mathcal{V},\\
\end{split}
\end{eqnarray}
where $a(\cdot; \cdot)$ and $b(\cdot;\cdot)$ are a bilinear form and linear form on $\mathcal{V}$, respectively. We denote the Hilbert space of the random variables with second order moments by $L_{P_{\bm{\xi}}}^2(\Omega)$, which is defined by
\[
L_{P_{\bm{\xi}}}^2(\Omega)=\bigg\{w: y\in \Omega \rightarrow w(y)\in \mathbb{R}; \int_\Omega w(y)^2 P_{\bm{\xi}}(dy)< \infty\bigg\}.
\]
The inner product in $L_{P_{\bm{\xi}}}^2(\Omega)$ is given by
\begin{eqnarray*}
\label{TENSOR PRODUCT}
(w,v)_{L_{P_{\bm{\xi}}}^2(\Omega)}:=\int_{\Omega}w(y)v(y)P_{\bm{\xi}}(dy),
\end{eqnarray*}
which induces the norm
\[
\|w\|_{L^2}^2=\|w\|_{L_{P_{\bm{\xi}}}^2(\Omega)}^2:=(w,w)_{L_{P_{\bm{\xi}}}^2(\Omega)}.
\]
The solution $u$ belongs to Hilbert space $L^2(\Omega; \mathcal{V})$, which can be identified with the tensor product space $\mathcal{V} \otimes L_{P_{\bm{\xi}}}^2(\Omega)$. For a simplicity of notation, we denote $L_{P_{\bm{\xi}}}^2(\Omega)$ by $\mathcal{S}$.
Then we define an inner product in  $\mathcal{V} \otimes \mathcal{S}$ by
\begin{eqnarray*}
\label{TENSOR PRODUCT}
(w,u)_{\mathcal{V} \otimes \mathcal{S}}=E[(w,u)_\mathcal{V}]:=\int_{\Omega}(w,u)_\mathcal{V} P_{\bm{\xi}}(dy).
\end{eqnarray*}
%where $\rho(\bm{\xi})$ is the density function for the random parameter $\bm{\xi}$.
Thus the norm is defined by
\[
\|u\|_{\mathcal{V}\otimes \mathcal{S}}^2:=(u,u)_{\mathcal{V}\otimes \mathcal{S}}.
\]
%A weak-stochastic formulation of (\ref{SPDE}) writes: find $\bar{u} \in \mathcal{V} \otimes \mathcal{S}$ such that
%\begin{eqnarray}
%\label{SPDE-weak-st}
%\begin{split}
%A(\bar{u}, v)=B(v) ~~~\forall ~v \in \mathcal{V} \otimes \mathcal{S},\\
%\end{split}
%\end{eqnarray}
%where the bilinear form $A$ and linear form are defined by
%\[
%A(\bar{u}, v):=E\big(a(u(\bm{\xi}), v; \bm{\xi})\big), ~~B(v):=E\big(b(v;\bm{\xi})\big),
%\]
%where $E$ is the mathematical expectation defined by
%\begin{eqnarray*}
%\label{v-mean}
%\begin{split}
%E\big(v(\bm{\xi})\big)=\int_\Omega v(y)P_{\bm{\xi}}(dy).\\
%\end{split}
%\end{eqnarray*}
%A weak-stochastic formulation of (\ref{SPDE}) writes: find $\bar{u} \in \mathcal{V}$ such that
%\begin{eqnarray}
%\label{SPDE-weak-st}
%\begin{split}
%A(u, v)=B(v) ~~~\forall ~v \in \mathcal{V},\\
%\end{split}
%\end{eqnarray}
%where the bilinear form $A$ and linear form are defined by
%\[
%A(u, v):=E\big(a(u(\bm{\xi}), v; \bm{\xi})\big), ~~B(v):=E\big(b(v;\bm{\xi})\big),
%\]
%where $E$ is the mathematical expectation defined by
%\begin{eqnarray*}
%\label{v-mean}
%\begin{split}
%E\big(v(\bm{\xi})\big)=\int_\Omega v(y)P_{\bm{\xi}}(dy).\\
%\end{split}
%\end{eqnarray*}

We suppose that $\bm{\xi}$ can be split into $r$ mutually independent sets of random variables $\{\bm{\xi}_k\}_{k=1}^r$, where $\bm{\xi}_k$ takes values in $\Omega_k\in \mathbb{R}^{d_k}$, and $d=\sum_{k=1}^r d_k$. We denote the probability space associated with $\bm{\xi}_k$ by $(\Omega_k, \mathcal{B}_k,P_{\bm{\xi}_k} )$, where $P_{\bm{\xi}_k}$ is the probability law of $\bm{\xi}_k$. Therefore, the probability space $(\Omega, \mathcal{B},P_{\bm{\xi}} )$ associated with $\bm{\xi}$ has a product structure with $\Omega=\times_{k=1}^r\Omega_k$ and $P_{\bm{\xi}}=\otimes_{k=1}^rP_{\bm{\xi}_k}$. Consequently, the Hilbert space $\mathcal{S}$ is a tensor Hilbert space with the following tensor structure:
\[
\mathcal{S}=\mathcal{S}^1\otimes\cdots\otimes\mathcal{S}^r, ~~~\mathcal{S}^k:=L_{P_{\bm{\xi}_k}}^2(\Omega_k),~~k=1,\cdots,r.
\]
If the $d_k$ random variables $\bm{\xi}_k=(\xi_{k,1},\cdots,\xi_{k,d_k})$ are mutually independent and probability space $(\Omega_k, \mathcal{B}_k,P_{\bm{\xi}_k})$ has
itself a product structure: $\Omega_k=\times_{i=1}^{d_k}\Omega_{k,i}$ and $P_{\bm{\xi}_k}=\otimes_{i=1}^{d_k}P_{\xi_{k,i}}$, the Hilbert space $\mathcal{S}^k$ has the following tensor product structure:
\[
\mathcal{S}^k=\mathcal{S}^{k,1}\otimes\cdots\otimes\mathcal{S}^{k,d_k}, \quad \text{where} \quad  \mathcal{S}^{k,j}:=L_{P_{\xi_{k,j}}}^2(\Omega_{k,j}) \quad \text{for} \quad  j=1,\cdots,d_k.
\]

We introduce approximation spaces $\mathcal{S}_{n_k}^{k}\subset \mathcal{S}^{k}$ with orthonormal basis $\{\phi_j^k\}_{j=1}^{n_k}$ such that
\[
\mathcal{S}_{n_k}^{k}=\bigg\{v(\bm{\xi}_k)=\sum_{j=1}^{n_k}v_j\phi_j^k(\bm{\xi}_k); v_j\in \mathbb{R}\bigg\}=\{v(\bm{\xi}_k)=\bm{\phi}^k(\bm{\xi}_k)\mathbf{v}; \mathbf{v}\in \mathbb{R}^{n_k}\},
\]
where $\mathbf{v}$ denotes the vector of coefficients of $v$, and $\bm{\phi}^k=(\phi_1^k,\cdots,\phi_{n_k}^k)$ denotes the vector of basis functions.
Then the approximation space $\mathcal{S}_n\subset\mathcal{S}$ is obtained by
\[
\mathcal{S}_n=\mathcal{S}_{n_1}^1\otimes\cdots\otimes\mathcal{S}_{n_r}^r=\bigg\{v=\sum_{i\in I_n} v_i \phi_i; v_i\in \mathbb{R}\bigg\}=\bigg\{v(\bm{\xi})=\langle\bm{\phi}(\bm{\xi}),\mathbf{v}\rangle; \mathbf{v}\in \mathbb{R}^{n_1}\otimes\cdots\otimes\mathbb{R}^{n_r}\bigg\},
\]
where
\begin{eqnarray*}
&I_n=\times_{k=1}^r{1\cdots n_k},  \quad \phi_i=\big(\phi_{i_1}^1\otimes\cdots\otimes\phi_{i_r}^r\big)(\bm{\xi}_1\cdots\bm{\xi}_r)=\prod_{j=1}^r\phi_{i_j}^j(\bm{\xi}_j),\\
& \bm{\phi}(\bm{\xi})=(\bm{\phi}^1(\bm{\xi}_1),\cdots,\bm{\phi}^r(\bm{\xi}_r))\in \mathbb{R}^{n_1}\otimes\cdots\otimes\mathbb{R}^{n_r}.
\end{eqnarray*}
Here $\langle\cdot,\cdot\rangle$ is the canonical inner product in $\mathbb{R}^{n_1}\otimes\cdots\otimes\mathbb{R}^{n_r}$.

Given the approximation space $\mathcal{S}_n$, which  is sufficiently rich to approximate our quantity of interest $G(\bm{\xi})$, we attempt to provide  methods to approximate $G(\bm{\xi})$ in $\mathcal{S}_n$ for high dimensional applications  using only limited information on $G(\bm{\xi})$. As an alternative method, we will approximate high dimensional functions using hierarchical sparse low rank approximations.

%%%%%%%%%%%%%%%%%%%%%%%%%%%%%%%%%%%%%%%%%%%%%%%%%%%%%%%%%%%%%
\section{A novel variable-separation }
\label{apgd}

It is known that proper generalized decomposition (PGD) method \cite{nouy2010proper}   can be seen as a approach for the apriori construction of separated representations of the solution defined in tensor product spaces. Here we propose  a novel variable-separation (NVS) method to get a variable-separated representation for multivariate functions without using any iteration in each term, which is required in PGD.

\subsection{NVS for multivariable function}
\label{NVSDF}

We first develop the novel variable-separation for multivariable functions, which can be applied to obtain an affine representation for model's inputs (e.g., coefficients and source terms) to achieve offline-online computation. Given $G(x,\bm{\xi})$, we want to construct an approximation in the form
\begin{eqnarray}
\label{separat-presentation}
G(x,\bm{\xi})\approx G_N(x,\bm{\xi}):=\sum_{i=1}^{N}\zeta_i(\bm{\xi})g_i(x),
\end{eqnarray}
where $\zeta_i(\bm{\xi})$ only depends on $\bm{\xi}$ and $g_i(x)$ only depends on $x$.
%We seek the variable-separation form (\ref{separat-presentation}) in the mean sense, i.e., the solution $G_N$ that satisfies
%\[
%G_N=\arg\min_v\int_\Omega\int_D(G-v)dxd\bm{\xi}.
%\]
%In order to solve for $G_N(x,\bm{\xi})$ numerically, we approximate $G(x,\bm{\xi})$ in the form of (\ref{separat-presentation}).

Now we develop  an algorithm to obtain $\zeta_i(\bm{\xi})$ and $g_i(x)$ for each term of the right-hand side in (\ref{separat-presentation}). To this end, we  initialize a residual $r_0(x,\bm{\xi})=G(x,\bm{\xi})$. At step $k$, we obtain $\zeta_k(\bm{\xi})$ and $g_k(x)$ by taking
\begin{eqnarray}
\label{pgd-g}
%g_k(x)=E[r_{k-1}(x,\bm{\xi})]=:\int_{\Omega}r_{k-1}(x,\bm{\xi})\rho(\bm{\xi}) d\bm{\xi}, \frac{1}{|\Xi_i|}
%g_k(x)=E[r_{k-1}(x,\bm{\xi})],
%$\Xi_i\in \Omega$ is a set of samples, and,   =:\sum_{\bm{\xi}\in \Xi_i}r_{k-1}(x,\bm{\xi})\rho(\bm{\xi})
g_k(x)=r_{k-1}(x,\bar{\bm{\xi}}),
\end{eqnarray}
where $\bar{\bm{\xi}} :=\arg\max_{\bm{\xi}\in \Xi}\|r_{k-1}(x,\bm{\xi})\|_{L^{\infty}(D)},$ and taking
\begin{eqnarray}
\label{pgd-z}
\zeta_k(\bm{\xi})=\frac{r_{k-1}(\bar{x},\bm{\xi})}{g_k(\bar{x})},
\end{eqnarray}
where $\bar{x}\in D $ is a point satisfying
\[
\bar{x} :=\arg\sup_{x\in D}|r_{k-1}(x,\bar{\bm{\xi}})|=\arg\sup_{x\in D}|g_k(x)|.
\]
Then we take $G_k(x,\bm{\xi})$ as follows
\begin{eqnarray}
\label{pgd-residual}
G_k(x,\bm{\xi}):=\sum_{i=1}^{k} g_i(x)\zeta_i(\bm{\xi}),
\end{eqnarray}
and update the residual $r_k(x,\bm{\xi})$ by
\begin{eqnarray}
\label{pgd-residual}
r_k(x,\bm{\xi})=G(x,\bm{\xi})-G_k(x,\bm{\xi}).
\end{eqnarray}
When  $\|r_k(x,\bm{\xi})\|_{\mathcal{V} \otimes \mathcal{S}}^2$ is small enough, we can stop the iteration procedure.

\begin{rem}
To obtain a set of $(\cdot,\cdot)_\mathcal{V}$-orthonormal functions $\{g_i(x); 1\leq i\leq N\}$, we can  apply the Gram-Schmidt process to $g_k(x)$ and $\{g_i(x); 1\leq i\leq k-1\}$ under the $(\cdot,\cdot)_\mathcal{V}$ inner product at each step $k$.
\end{rem}

We introduce the notion of n-width following Kolmogorov \cite{Buffa2012priori,   Kolmogoroff1936} for convergence analysis. Let $\mathcal{F}=\{G(x,\bm{\xi})\in \mathcal{V}: \bm{\xi}\in \Omega\}$. The Kolmogorov n-width of $\mathcal{F}$ in $\mathcal{V}$ is given by
\begin{eqnarray*}
\label{Kolmogorov width}
d_{n}(\mathcal{F}, \mathcal{V}):=\inf \{ E(\mathcal{F};Y_n): Y_n \quad  \text{is a n-dimensional subspace of $\mathcal{V}$} \},
\end{eqnarray*}
where $E(\mathcal{F};Y_n)$ is the angle between $\mathcal{F}$ and $Y_n$.
To prove the convergence rate for NVS, we use the following lemma.
\begin{lem}\cite{Buffa2012priori}
\label{Kolmogorov n-width}
Assume that the set $\mathcal{F}$ has an exponentially small Kolmogorov n-width $d_{k}(\mathcal{F}, \mathcal{V})\leq C e^{-\alpha k}$ with $\alpha > \log 2$, and $\{g_i(x); 1\leq i\leq k\}$ is a set of $(\cdot,\cdot)_\mathcal{V}$-orthonormal functions, then there exists $\beta > 0$, $C_1 > 0$ and $j\leq k$ such that
\begin{eqnarray*}
\label{expb}
\|g_j(x)\|_{\mathcal{V}} \leq C_1 e^{-\beta k}.
\end{eqnarray*}
\end{lem}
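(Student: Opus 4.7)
The plan is to interpret $\|g_j\|_{\mathcal V}$ as the worst-case residual norm produced by the greedy NVS algorithm after step $j{-}1$, and then invoke the standard comparison between a (weak) greedy error and the Kolmogorov $n$-width. In the NVS construction, $g_k(x)=r_{k-1}(x,\bar{\bm\xi})$ with $\bar{\bm\xi}$ chosen to maximize $\|r_{k-1}(\cdot,\bm\xi)\|$, and after the Gram--Schmidt step of the preceding remark, $g_k$ is orthogonal to $g_1,\dots,g_{k-1}$; hence $\|g_k\|_{\mathcal V}$ coincides (up to an absolute constant coming from the $L^\infty$-vs-$\mathcal V$ selection) with the greedy error
\[
\sigma_{k-1}(\mathcal F):=\sup_{\bm\xi\in\Omega}\;\mathrm{dist}_{\mathcal V}\bigl(G(\cdot,\bm\xi),\;\mathrm{span}\{g_1,\ldots,g_{k-1}\}\bigr).
\]
So the lemma is, in essence, a statement about exponential decay of $\sigma_n(\mathcal F)$ whenever $d_n(\mathcal F,\mathcal V)$ decays exponentially fast enough.

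The key step I would use is the comparison inequality of Buffa--Maday--Patera--Prud'homme--Turinici, which says that for the greedy selection one has
\[
\sigma_{2n}(\mathcal F)\;\le\;C\,2^{n}\,d_{n}(\mathcal F,\mathcal V)
\]
(and analogous variants). I would sketch its derivation via the standard volume/determinant argument: express the first $2n$ greedy-picked snapshots in a basis of the best-approximating $n$-dimensional subspace, bound the Gram determinant of the orthogonalized snapshots from below by a product of residual norms, and from above by $d_n^{\,2n}$ times a combinatorial constant; rearranging yields the factor $2^n$.

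With this inequality in hand the conclusion is immediate. Substituting the hypothesis $d_k\le C e^{-\alpha k}$ gives
\[
\sigma_{2k}(\mathcal F)\;\le\;C\,2^{k}e^{-\alpha k}\;=\;C\,e^{-(\alpha-\log 2)k},
\]
and the assumption $\alpha>\log 2$ makes the exponent strictly negative. Setting $\beta:=(\alpha-\log 2)/2>0$ and relabeling via the monotonicity of $\sigma_n$ produces an index $j\le k$ with $\|g_j\|_{\mathcal V}\le C_1 e^{-\beta k}$.

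The main obstacle is the comparison inequality itself: everything else is bookkeeping. The determinant argument is delicate because the greedy algorithm here uses an $L^\infty(D)$-maximization over $\bm\xi$ (see \eqref{pgd-g}), not a $\mathcal V$-norm maximization, so I would first need to absorb the resulting norm-equivalence constant into the "weak-greedy" parameter $\gamma\in(0,1]$ of Buffa et al., after which the cited result applies verbatim. Verifying that the factor $\gamma^{-2n}$ does not spoil the exponential rate (i.e.\ that $\alpha>\log(2/\gamma)$ can be arranged, perhaps at the cost of enlarging $C$) is the only genuinely nontrivial check in the proof.
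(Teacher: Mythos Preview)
The paper does not prove this lemma; it is quoted verbatim from \cite{Buffa2012priori} and then invoked in the proof of Theorem~\ref{theorem-nvs}. So there is no ``paper's own proof'' to compare against beyond the cited reference itself.

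Your sketch is precisely the argument of Buffa et al.: the comparison inequality $\sigma_{2n}(\mathcal F)\le C\,2^{n}d_n(\mathcal F,\mathcal V)$ obtained via the Gram-determinant/volume estimate, followed by the substitution $d_n\le Ce^{-\alpha n}$ and the observation that $\alpha>\log 2$ makes the net exponent negative. In that sense your proposal and the paper's cited source coincide.

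One point worth flagging. The lemma as stated in the paper calls $\{g_i\}$ \emph{orthonormal}, which would force $\|g_j\|_{\mathcal V}=1$ and render the conclusion vacuous. You implicitly (and correctly) read $\|g_j\|_{\mathcal V}$ as the greedy residual $\sigma_{j-1}(\mathcal F)$, i.e.\ the norm of $g_j$ \emph{before} the Gram--Schmidt normalization of Remark~3.1; this is also how the quantity is used in the proof of Theorem~\ref{theorem-nvs}, where $g_j=r_{j-1}(\cdot,\bar{\bm\xi})$. Your remark about absorbing the $L^\infty$-versus-$\mathcal V$ selection discrepancy into a weak-greedy constant $\gamma$ is the right way to reconcile the NVS selection rule \eqref{pgd-g} with the hypothesis of the Buffa et al.\ theorem; the paper does not address this point.
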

%The proof of Lemma \ref{Kolmogorov n-width} can.

The following theorem shows that the approximation by NVS is convergent.
\begin{thm}
\label{theorem-nvs}
Suppose that the assumptions in Lemma \ref{Kolmogorov n-width} hold and   $G(x,\bm{\xi})\in H^s(D)$ for $\forall \bm{\xi}\in \Omega$, where $s>1$.
Let $r_i(x, \bm{\xi})$ be  given by (\ref{pgd-residual}) for $i=1,\cdots,k$.
Then there exist $\beta > 0$, $C > 0$ and $j\leq k$ such that
\begin{eqnarray*}
\label{proof-r3}
\|r_{j-1}\|_{\mathcal{V} \otimes \mathcal{S}} \leq C e^{-\beta k}.
\end{eqnarray*}
%\begin{eqnarray*}
%\label{rk-0}
%\|r_k\|_{\mathcal{V} \otimes \mathcal{S}}^2\rightarrow 0 \quad \text{ as } \quad k\rightarrow \infty.
%\end{eqnarray*}
\end{thm}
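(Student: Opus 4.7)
The plan is to combine Lemma \ref{Kolmogorov n-width} with the selection rule defining $g_k$ and $\bar{\bm{\xi}}$, using the Sobolev embedding afforded by the hypothesis $G(\cdot,\bm{\xi})\in H^s(D)$ with $s>1$. First I would invoke the remark following the algorithm to orthonormalize $\{g_i\}_{i=1}^{k}$ with respect to $(\cdot,\cdot)_{\mathcal{V}}$ via Gram--Schmidt. Since this only changes the representation and not the span of $G_k$, the residuals $r_{j-1}$ referenced in the conclusion are unchanged. Applying Lemma \ref{Kolmogorov n-width} to this orthonormal family immediately produces constants $\beta>0$, $C_1>0$ and an index $j\le k$ with $\|g_j\|_{\mathcal{V}}\le C_1 e^{-\beta k}$.

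The next step is to transfer this bound on one extracted snapshot $g_j$ to a bound on the full random field $r_{j-1}$ in the $\mathcal{V}\otimes\mathcal{S}$ norm. By construction,
\[
g_j(x)=r_{j-1}(x,\bar{\bm{\xi}}_j),\qquad \bar{\bm{\xi}}_j=\arg\max_{\bm{\xi}\in\Xi}\|r_{j-1}(\cdot,\bm{\xi})\|_{L^\infty(D)},
\]
so that for every $\bm{\xi}\in\Omega$ we have $\|r_{j-1}(\cdot,\bm{\xi})\|_{L^\infty(D)}\le\|g_j\|_{L^\infty(D)}$. The assumption $s>1$ (with the physical domain taken so that $s$ exceeds half its dimension) gives the Sobolev embedding $H^s(D)\hookrightarrow L^\infty(D)$, hence a constant $c_e$ with $\|g_j\|_{L^\infty(D)}\le c_e\|g_j\|_{\mathcal{V}}$. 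Combined with the boundedness of $D$, which yields $\|v\|_{\mathcal{V}}\le C_D\|v\|_{L^\infty(D)}$ for an appropriate compatibility constant, this chains into
\[
\|r_{j-1}(\cdot,\bm{\xi})\|_{\mathcal{V}}\le C_D\,c_e\,\|g_j\|_{\mathcal{V}},\qquad\forall\,\bm{\xi}\in\Omega.
\]

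Squaring and integrating against $P_{\bm{\xi}}$ over $\Omega$ then gives
\[
\|r_{j-1}\|_{\mathcal{V}\otimes\mathcal{S}}^{2}=\int_{\Omega}\|r_{j-1}(\cdot,\bm{\xi})\|_{\mathcal{V}}^{2}\,P_{\bm{\xi}}(dy)\le (C_D c_e C_1)^{2}\,e^{-2\beta k},
\]
which is the claim with $C=C_D c_e C_1$ (possibly after a harmless adjustment of $\beta$). The main obstacle I anticipate is handling the mismatch of norms inside the algorithm: the maximizer $\bar{\bm{\xi}}_j$ is defined with respect to $\|\cdot\|_{L^\infty(D)}$, whereas both the Kolmogorov width and the conclusion of Lemma \ref{Kolmogorov n-width} are expressed in $\|\cdot\|_{\mathcal{V}}$. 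Bridging these requires the Sobolev embedding in one direction and a bounded-domain embedding in the other, which is precisely where the hypothesis $G(\cdot,\bm{\xi})\in H^s(D)$ with $s>1$ is used; once these compatibility constants are in hand, the remaining integration over $\Omega$ and the application of the lemma are routine.
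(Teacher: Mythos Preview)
Your proposal follows essentially the same route as the paper's proof: invoke Lemma~\ref{Kolmogorov n-width} on the (orthonormalized) family $\{g_i\}$ to obtain $\|g_j\|_{\mathcal{V}}\le C_1 e^{-\beta k}$, use the maximizer property of $\bar{\bm{\xi}}_j$ to bound $\|r_{j-1}(\cdot,\bm{\xi})\|_{L^\infty(D)}$ by $\|g_j\|_{L^\infty(D)}$, pass from $L^\infty$ to $\mathcal{V}$ via the Sobolev embedding $H^s(D)\hookrightarrow L^\infty(D)$, and integrate over $\Omega$. The paper additionally records the bound $|\zeta_k(\bm{\xi})|\le 1$ and uses the explicit constant $|D|$ in the step $\|v\|_{\mathcal{V}}^2\le |D|\,\|v\|_{L^\infty(D)}^2$, but the skeleton of the argument---and in particular the norm-compatibility issue you flag as the main obstacle---is identical.
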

\begin{proof}
By the maximization in the definition of $\bar{\bm{\xi}}$,  we have
\begin{eqnarray}
\label{zeta1}
\forall \bm{\xi}\in \Omega, ~~~ |\zeta_k(\bm{\xi})|\leq 1.
\end{eqnarray}
By (\ref{zeta1}) and the Lemma \ref{Kolmogorov n-width},  there exist $\beta > 0$, $C_1 > 0$ and $j\leq k$ such that
\begin{eqnarray}
\label{expb}
\|g_j(x)\|_{\mathcal{V}} \leq C_1 e^{-\beta k}.
\end{eqnarray}
Note that
\begin{eqnarray}
\label{proof-r1}
\begin{split}
\|r_{j-1}\|_{\mathcal{V} \otimes \mathcal{S}}^2=&\int_{\Omega}\|r_{j-1}(x,\bm{\xi})\|_\mathcal{V}^2 P_{\bm{\xi}}(dy)\\
\leq&|D|\int_{\Omega}\|r_{j-1}(x,\bm{\xi})\|_{L^{\infty}(D)}^2 P_{\bm{\xi}}(dy)\\
 \leq&|D|\max_{\bm{\xi}\in \Xi}\|r_{j-1}(x,\bm{\xi})\|_{L^{\infty}(D)}^2\\
 =&C_2 \|r_{j-1}(x,\bm{\xi}_j)\|_{L^{\infty}(D)}^2 ~~~(\text{ take  } C_2=|D|~)\\
 =&C_2 \|g_j(x)\|_{L^{\infty}(D)}^2.
\end{split}
\end{eqnarray}
Due to the Sobolev imbedding of $\mathcal{V}=H^s(D)$ ($s>1$) into $L^{\infty}(D)$, we have
\begin{eqnarray}
\label{proof-r2}
 \|g_j(x)\|_{L^{\infty}(D)} \leq C_3 \|g_j(x)\|_{\mathcal{V}}.
\end{eqnarray}
By (\ref{expb}), (\ref{proof-r1}) and (\ref{proof-r2}),
there exist $\beta > 0$ and $C =\sqrt{|D|}C_1C_3> 0$ such that
\begin{eqnarray*}
\label{proof-r3}
\|r_{j-1}\|_{\mathcal{V} \otimes \mathcal{S}} \leq C e^{-\beta k}, ~~~~(j\leq k),
\end{eqnarray*}
where $C$ depends only on $s$ and $D$. This completes the proof.
\end{proof}

The convergence analysis is based on the steps from equation (\ref{pgd-g}) to  (\ref{pgd-residual}), which imply a practical algorithm for NVS.   In practical computation, we can relax the condition to choose $\bar{\bm{\xi}}$  and $\bar{x}$ in those steps. We can randomly choose $\bar{\bm{\xi}}$ such that $\bar{\bm{\xi}}$ in each step $i$ ($i=1,\cdots N$) is only different from previous steps. For $\bar{x}$, we just want $g_k(\bar{x})\neq 0$ in equation (\ref{pgd-z}). We describe the practical  algorithm  of NVS for multivariate functions  in Algorithm \ref{algorithm-mf}.

\begin{algorithm}
\caption{NVS for multivariate functions }
     \textbf{Input}:  The  function $G(x,\bm{\xi})$, a set of samples $\Xi\in\Omega$, the error tolerance $\varepsilon$ \\
%    and a tainting set $\Xi_{t}$
     \textbf{Output}:  The separated representation $G_N(x,\bm{\xi}):= \sum_{i=1}^N g_i(x)\zeta_i(\bm{\xi})$\\
      ~1:~~Initialize the residual $\textbf{r}_0=G(x,\bm{\xi})$, the iteration counter $k=1$; \\
      ~2:~~Take $\bar{\bm{\xi}}=\arg\max_{\bm{\xi}\in \Xi}\|r_{k-1}(x,\bm{\xi})\|_{\mathcal{V}}$ and $\bar{x}$ such that $g_k(\bar{x})\neq 0$, calculate  $\zeta_k(\bm{\xi})=\frac{r_{k-1}(\bar{x},\bm{\xi})}{g_k(\bar{x})}$\\
      $~~~~~$ and $g_k(x)=r_{k-1}(x,\bar{\bm{\xi}})$;\\
      ~3:~~Update $\Xi$ with $\Xi=\Xi\setminus \bar{\bm{\xi}}$, and take the approximation $G_k(x,\bm{\xi}):= \sum_{i=1}^k g_i(x)\zeta_i(\bm{\xi})$ and \\
      $~~~~~$the residual $\textbf{r}_k=G(x,\bm{\xi})-G_k(x,\bm{\xi})$; \\
      ~4:~~$k\rightarrow k+1$;\\
      ~5:~~Return to Step 2 if $\|r_i\|_{\mathcal{V} \otimes \mathcal{S}}^2 \geq \varepsilon$, otherwise \textbf{terminate} .\\
      ~6:~~$N=k$;
   \label{algorithm-mf}
\end{algorithm}
\begin{rem}
 When we apply Algorithm \ref{algorithm-mf} to practical computation, we
 usually take a small  sample set $\Xi$  scattered in the random space.
\end{rem}

\begin{rem}
 To get an affine representation for model's inputs, NVS has the same merits as Empirical Interpolation Method (EIM). But NVS is is much more efficient than EIM.
  This is because that: (1) In NVS, there is no need to choose the suitable parameter values and interpolation nodes based on a large training set; (2) for each $\bm{\xi} \in \Omega$, we can calculate the approximation directly by (\ref{separat-presentation}) instead of solving an algebraic system, which is required in EIM.
\end{rem}

\subsection{NVS for stochastic partial differential equations}

The NVS can be applied to SPDE and give a variable-separation representation for solution.
Let  the SPDE have the weak formulation (\ref{SPDE-strong-st}), where we  assume that the  bilinear form $a(\cdot, \cdot; \bm{\xi})$ and the associated  linear form $b(\cdot; \bm{\xi})$ are affine with respect to $\bm{\xi}$, i.e.,
\begin{eqnarray}
\label{affinely-ag}
\begin{cases}
\begin{split}
a(w,v;\bm{\xi})&=\sum_{i=1}^{m_{a}}k^{i}(\bm{\xi})a^{i}(w,v), \quad \forall w,v\in \mathcal{V},\quad \forall \bm{\xi} \in \Omega, \\
b(v;\bm{\xi})&=\sum_{i=1}^{m_{b}}f^{i}(\bm{\xi})b^{i}(v),  \quad \forall v\in \mathcal{V},\quad \forall \bm{\xi} \in \Omega.
\end{split}
\end{cases}
\end{eqnarray}
In the above, for $i= 1, \cdots , m_{a}$, each $k^{i}(\bm{\xi})\in\mathcal{S}$ is a stochastic function and each $a^{i}: \mathcal{V} \times \mathcal{V}\longrightarrow \mathbb{R}$ is  a symmetric bilinear form independent of $\bm{\xi}$. For $i= 1, \cdots, m_{b}$, each  $f^{i}(\bm{\xi})\in\mathcal{S}$ is a stochastic function and each  $b^{i}:\mathcal{V} \longrightarrow \mathbb{R}$ is continuous functional independent of $\bm{\xi}$. When  $a(\cdot, \cdot; \bm{\xi})$ and $b(\cdot; \bm{\xi})$ are not affine with respect to $\bm{\xi}$, such an affine expansion
can be obtained by using NVS presented in Section \ref{NVSDF}.

Let $\mathcal{V}_h\subset \mathcal{V}$ be a given finite dimensional approximation space. We find  the numerical solution to problem (\ref{SPDE-strong-st}) under the form
\begin{eqnarray}
\label{separat-spde}
u(x,\bm{\xi})\approx u_N(x,\bm{\xi}):=\sum_{i=1}^{N}\zeta_i(\bm{\xi})g_i(x),
\end{eqnarray}
where $\zeta_i(\bm{\xi})\in \mathcal{S}$ are stochastic functions and $g_i(x)\in\mathcal{V}_h$ are
deterministic functions. Let the residual for NVS
\[
e(\bm{\xi}): = u(\bm{\xi})-u_{k-1} (\bm{\xi}).
\]
By equation (\ref{SPDE-strong-st}), we get
\begin{eqnarray*}
\label{deduce equation}
\begin{split}
a\big(u(\bm{\xi})-u_{k-1}(\bm{\xi})+u_{k-1}(\bm{\xi}),v;\bm{\xi}\big)=b(v;\bm{\xi}), \quad \forall v\in \mathcal{V}_h,
\end{split}
\end{eqnarray*}
that is,
\[
a\big(e(\bm{\xi}),v;\bm{\xi}\big)=b(v;\bm{\xi})-a\big(u_{k-1}(\bm{\xi}),v;\bm{\xi}\big), \quad \forall v\in \mathcal{V}_h.
\]
Let $r(v;\bm{\xi})\in \mathcal{V}_h^{*}$ (the dual space of $ \mathcal{V}_h$) be the residual
\begin{eqnarray}
\label{Rfa-weak-eq}
%r(v;\bm{\xi}):=b(v;\bm{\xi})-a\big(u_{k-1}(\bm{\xi}),v;\bm{\xi}\big), \quad \forall v\in \mathcal{V}_h.
r(v;\bm{\xi}):&=
\begin{cases}
\begin{split}
&b(v;\bm{\xi}) ,& ~k=1, \\
&b(v;\bm{\xi})-a\big(u_{k-1}(\bm{\xi}),v;\bm{\xi}\big),& ~k\geq 2.
\end{split}
\end{cases}
\end{eqnarray}
Then we get
\begin{eqnarray}
\label{SPDE-strong-st-red}
\begin{split}
a\big(e(\bm{\xi}),v;\bm{\xi}\big)=r(v;\bm{\xi}), \quad \forall v\in \mathcal{V}_h.
\end{split}
\end{eqnarray}
By Riesz representation theory,  there exists a function $\hat{e}(\bm{\xi})\in \mathcal{V}_h$ such that
\begin{eqnarray}
\label{Riesz-weak-eq}
\begin{split}
\big(\hat{e}(\bm{\xi}),v\big)_{\mathcal{V}}=r(v;\bm{\xi}),  \quad \forall v\in \mathcal{V}_h.
\end{split}
\end{eqnarray}
Then we can rewrite the error residual equation (\ref{SPDE-strong-st-red}) as
\begin{eqnarray*}
\label{Riesz error residual equation}
\begin{split}
a\big(e(\bm{\xi}),v;\bm{\xi}\big)=\big(\hat{e}(\bm{\xi}),v\big)_{\mathcal{V}}, \quad \forall v\in \mathcal{V}_h.
\end{split}
\end{eqnarray*}
Consequently,  the dual norm of the residual  $r(v; \bm{\xi})$  can be evaluated through the Riesz representation,
\begin{eqnarray}
\label{dual-norm-residual}
\begin{split}
\|r(v;\bm{\xi})\|_{\mathcal{V}^{*}}:=\sup_{v\in \mathcal{V}_h}\frac{r(v;\bm{\xi})}{\|v\|_{\mathcal{V}}}=\|\hat{e}(\bm{\xi})\|_{\mathcal{V}}.
\end{split}
\end{eqnarray}
The computation of the residual is crucial to  NVS. To efficiently compute $\|\hat{e}(\bm{\xi})\|_\mathcal{V}$, we apply an offline-online procedure presented in \cite{prv02, rhp08}.

By (\ref{Rfa-weak-eq}) and (\ref{affinely-ag}),  the residual can be expressed by
\begin{eqnarray}
\label{Qresidual-eq-right}
\begin{split}
r(v;\bm{\xi})&=b(v;\bm{\xi})-a(u_{k-1}(\bm{\xi}),v;\bm{\xi})\\
&=b(v;\bm{\xi})-\sum_{i=1}^{k-1}\zeta_i(\bm{\xi})a(g_{i},v;\bm{\xi})\\
&=\sum_{q=1}^{m_{b}}f^{q}(\bm{\xi})b^{q}(v)-\sum_{i=1}^{k-1}\zeta_i(\bm{\xi})\sum_{p=1}^{m_{a}}k^{p}(\bm{\xi})a^{p}(g_{i},v).
\end{split}
\end{eqnarray}
By (\ref{Qresidual-eq-right}) and (\ref{Riesz-weak-eq}), we have
\begin{eqnarray*}
\label{Riesz Qerror estimators}
 (\hat{e}(\bm{\xi}),v)_\mathcal{V}=\sum_{q=1}^{m_{b}}f^{q}(\bm{\xi})b^{q}(v)-\sum_{i=1}^{k-1}\zeta_i(\bm{\xi})\sum_{p=1}^{m_{a}}k^{p}(\bm{\xi})a^{p}(g_{i},v).
\end{eqnarray*}
This implies that
\begin{eqnarray}
\label{Riesz-error-estimators}
\hat{e}(\bm{\xi})=\sum_{q=1}^{m_{b}}f^{q}(\bm{\xi})\mathcal{C}_{q}+\sum_{i=1}^{k-1}\zeta_i(\bm{\xi})\sum_{p=1}^{m_{a}}k^{p}(\bm{\xi})\mathcal{L}_{i}^{p},
\end{eqnarray}
where $\mathcal{C}_{q}$ is the Riesz representation of $l^{q}$, i.e., $(\mathcal{C}_{q},v)_\mathcal{V}=b^{q}(v)$ for any $v\in \mathcal{V}$,  $1\leq q\leq m_{b}$. Similarly, $\mathcal{L}_{i}^{p}$  is the Riesz representation of $a^{p}(g_i,v)$, i.e., $(\mathcal{L}_{i}^{p},v)_\mathcal{V}=-a^{p}(g_i,v)$ for any $v\in \mathcal{V}$, where $1\leq i\leq k-1$ and $1\leq p\leq m_{a}$.
 The equation (\ref{Riesz-error-estimators}) gives rise to
\begin{eqnarray}
\label{error-norm-est}
\begin{split}
\|\hat{e}(\bm{\xi})\|_\mathcal{V}^2=&\sum_{q=1}^{m_{b}}\sum_{q'=1}^{m_{b}}f^{q}(\bm{\xi})f^{q'}(\bm{\xi})(\mathcal{C}_{q},\mathcal{C}_{q'})_\mathcal{V}+\sum_{i=1}^{k-1}\sum_{p=1}^{m_{a}}\zeta_i(\bm{\xi})k^{p}(\bm{\xi})\\
&\times\{2\sum_{q=1}^{m_{b}}f^{q}(\bm{\xi})(\mathcal{C}_{q},\mathcal{L}_{i}^{p})_\mathcal{V}+\sum_{i'=1}^{k-1}\sum_{p'=1}^{m_{a}}\zeta_{i'}(\bm{\xi})k^{p'}(\bm{\xi})
(\mathcal{L}_{i}^{p},\mathcal{L}_{i'}^{p'})_\mathcal{V}\}.
\end{split}
\end{eqnarray}

In the offline stage we compute $\mathcal{C}_{q}$ and $\mathcal{L}_{i}^{p}$, where $1\leq i\leq k-1$, $1\leq q\leq m_{b}$ and $1\leq p\leq m_{a}$. We store
$(\mathcal{C}_{q}, \mathcal{C}_{q'})_\mathcal{V}$,  $(\mathcal{C}_{q},\mathcal{L}_{i}^{p})_\mathcal{V}$, $(\mathcal{L}_{i}^{p},\mathcal{L}_{i'}^{p'})$ for online stage, where $1\leq i,i'\leq k-1$, $1\leq q,q'\leq m_{b}$,  $1\leq p,p'\leq m_{a}$.
In the online stage,  we  evaluate  $\|\hat{e}(\bm{\xi})\|_\mathcal{V}$ for any $\bm{\xi}$   using (\ref{error-norm-est}).

At step $k$, we choose
\begin{eqnarray*}
\bm{\xi}_k :&=
\begin{cases}
\begin{split}
&\text{chosen randomly in }  \Omega ,& ~k=1, \\
&\arg\max_{\bm{\xi}\in \Xi}\|\hat{e}(\bm{\xi})\|_{\mathcal{V}},& ~k\geq 2.
\end{split}
\end{cases}
\end{eqnarray*}
Let $e_h(x)$ be  equation (\ref{SPDE-strong-st-red}) with $\bm{\xi}=\bm{\xi}_k$.    We take $g_k(x)=e_h(x)$ in (\ref{separat-spde}).
Let $e(\bm{\xi}):=e_h(x)e_{\xi}(\bm{\xi})$. By equation (\ref{affinely-ag}), we get
\begin{eqnarray}
\label{Qresidual-eq-left}
\begin{split}
a(e(\bm{\xi}),v;\bm{\xi})&=e_{\xi}(\bm{\xi})\sum_{p=1}^{m_{a}}k^{p}(\bm{\xi})a^{p}(e_h(x),v),
\end{split}
\end{eqnarray}
%and
%\begin{eqnarray}
%\label{Qresidual-eq-right}
%\begin{split}
%r(v;\bm{\xi})&=b(v;\bm{\xi})-a(u_{k-1}(\bm{\xi}),v;\bm{\xi})\\
%&=b(v;\bm{\xi})-\sum_{i=1}^{k-1}\zeta_i(\bm{\xi})a(g_{i},v;\bm{\xi})\\
%&=\sum_{q=1}^{m_{b}}f^{q}(\bm{\xi})b^{q}(v)-\sum_{i=1}^{k-1}\zeta_i(\bm{\xi})\sum_{p=1}^{m_{a}}k^{p}(\bm{\xi})a^{p}(g_{i},v).
%\end{split}
%\end{eqnarray}
By (\ref{SPDE-strong-st-red}), (\ref{Qresidual-eq-left}) and (\ref{Qresidual-eq-right}), we have
\begin{eqnarray}
\label{SPDE-strong-st-red-af}
e_{\xi}(\bm{\xi})\sum_{p=1}^{m_{a}}k^{p}(\bm{\xi})a^{p}(e_h(x),v)=\sum_{q=1}^{m_{b}}f^{q}(\bm{\xi})b^{q}(v)-\sum_{i=1}^{k-1}\zeta_i(\bm{\xi})\sum_{p=1}^{m_{a}}k^{p}(\bm{\xi})a^{p}(g_{i},v), ~~\forall ~v \in \mathcal{V}_h.
\end{eqnarray}
We take $v=e_h(x)$ in equation (\ref{SPDE-strong-st-red-af}), then it follows  that
\begin{eqnarray}
\label{exi}
\zeta_k(\bm{\xi}):=e_{\xi}(\bm{\xi})=\frac{\sum_{q=1}^{m_{b}}f^{q}(\bm{\xi})b^{q}(e_h(x))-\sum_{i=1}^{k-1}\zeta_i(\bm{\xi})\sum_{p=1}^{m_{a}}k^{p}(\bm{\xi})a^{p}(g_{i},e_h(x))}{\sum_{p=1}^{m_{a}}k^{p}(\bm{\xi})a^{p}(e_h(x),e_h(x))}.
\end{eqnarray}

Algorithm \ref{algorithm-pgddf} describes the procedure  for NVS to solve stochastic partial differential equations. For practical simulation, we can take a small sample set $\Xi$ in Algorithm \ref{algorithm-pgddf}.
\begin{algorithm}
\caption{NVS for stochastic partial differential equation}
     \textbf{Input}:  The stochastic differential operator $\mathcal{L}(x,\bm{\xi})$, the source term $f(x,\bm{\xi})$, a set of samples $\Xi\in\Omega$, and the error tolerance $\varepsilon$\\
%      and a several sample sets $\{\Xi_{i}\}_{i=1}^$
     \textbf{Output}: The separated representation $u_N(x,\bm{\xi}):= \sum_{i=1}^N g_i(x)\zeta_i(\bm{\xi})$\\
      ~1:~~Initialize the residual $r(v;\bm{\xi}):=b(v;\bm{\xi})$,  a random $\bm{\xi}_1\in \Omega$,\\
      $~~~~~$the iteration counter $k=1$; \\
      ~2:~~Calculate $g_k(x)=e_h(x)$ by solving (\ref{SPDE-strong-st-red}) with $\bm{\xi}=\bm{\xi}_k$, and $\zeta_k(\bm{\xi})=e_{\xi}(\bm{\xi})$ by (\ref{exi});\\
      ~3:~~Update $\Xi$ with $\Xi=\Xi\setminus \bm{\xi}_k$, and take the approximation $u_k(x,\bm{\xi}):= \sum_{i=1}^k g_i(x)\zeta_i(\bm{\xi})$; \\
      ~4:~~Take the residual $r(v;\bm{\xi}):=b(v;\bm{\xi})-a(u_{k-1}(\bm{\xi}),v;\bm{\xi})$, $\bm{\xi}_k=\arg\max_{\bm{\xi}\in \Xi}\|\hat{e}(\bm{\xi})\|_{\mathcal{V}}$; \\
      ~5:~~$k\rightarrow k+1$;\\
      ~6:~~ return to Step 2 if $\|\hat{e}(\bm{\xi}_k)\|_{\mathcal{V}} \geq \varepsilon$, otherwise \textbf{terminate} .\\
      ~7:~~$N=k$;
   \label{algorithm-pgddf}
\end{algorithm}

By (\ref{exi}), we find that $\zeta_k(\bm{\xi})$ depends on $\{\zeta_i(\bm{\xi})\}_{i=1}^{k-1}$ computed previously, which effects on the computation efficiency leads to
great challenge for numerical simulation  when the number of terms $N$ for (\ref{separat-spde}) is  great. To overcome the difficulty,  we will propose    improved least angle regression algorithm (ILARS) and hierarchical sparse low rank tensor approximation method (HSLRTA) to construct the surrogates
  $\{\hat{\zeta}_i(\bm{\xi})\}_{i=1}^{N}$ for $\{\zeta_i(\bm{\xi})\}_{i=1}^{N}$, where $\{\hat{\zeta}_i(\bm{\xi})\}_{i=1}^{N}$ are  independent of each other.

%%%%%%%%%%%%%%%%%%%%%%%%%%%%%%%%%%%%%%%%%%%%%%%%
\section{Sparse regularization and ILARS for lasso problems}
\label{regularization-ILARS}

In this section, we will present  ILARS method, which can be used to approximate each $\zeta_i(\bm{\xi})$ in (\ref{exi}) under the form
\[
\zeta_i(\bm{\xi})\approx \sum_{i=1}^{\mathbf{N}}\mathbf{v}(i)\phi_i(\bm{\xi}),
\]
for $i=1,\cdots,N$, where $\{\phi_1(\bm{\xi}),\cdots,\phi_\mathbf{N}(\bm{\xi})\}$ is a set of basis functions for the given approximation space $\mathcal{S}_\mathbf{N}$. For the presentation, we consider a real-valued model output $w:\Omega \longrightarrow \mathbb{R}$. Let $\{\bm{\xi}^{(j)}\}_{j=1}^M$ be a set of $M$ samples of $\bm{\xi}$.
For the construction of an approximation $w_\mathbf{N}\in\mathcal{S}_\mathbf{N}$, we can use the ordinary least-squares method and solve the following optimization problem:
\begin{eqnarray*}
\label{least-squares method}
\begin{split}
\|w-w_\mathbf{N}\|_{L^2}^2=\min_{v\in \mathcal{S}_\mathbf{N}}\|w-v\|_{L^2}^2.
\end{split}
\end{eqnarray*}
%~~\text{with} ~~ \|v\|_{L^2}^2=\frac{1}{M}\sum_{j=1}^M v(\xi^{(j)})^2
The ordinary least-squares method may not give good results because the solution is very sensitive to samples. In the least-square method,  it is required that the number of parameter sample scales quadratically with the number of unknowns, i.e., $M=O(\mathbf{N}^2)$, ref. \cite{Migliorati2014Analysis}. In order to avoid these issues, we will impose some sparse regularization on the
optimization problem.

\subsection{Sparse regularization}
A sparse representation  is the one that admits an accurate  approximation with only a few nonzero terms. If a stochastic function is sparse with respect to a particular  basis, e.g., polynomial chaos, we can apply sparse regularization methods to get a sparse representation  with only a few samples.
To this end,  we consider a regularized least-squares functional defined by
\begin{eqnarray*}
\label{least-squares method}
\begin{split}
\mathcal{J}(v)=\|w-v\|_{L^2}^2+\lambda\mathcal{L}(v),
\end{split}
\end{eqnarray*}
where $\mathcal{L}$ is a regularization functional, and $\lambda$ denotes the regularization parameter.
Then the solution to the regularized least-squares problem solves the optimization problem, i.e.,
\begin{eqnarray}
\label{regularized least-squares method}
\begin{split}
\mathcal{J}(w_\mathbf{N})=\min_{v\in \mathcal{S}_\mathbf{N}}\mathcal{J}(v).
\end{split}
\end{eqnarray}
We denote the coefficients of an element $v =\sum_{i=1}^\mathbf{N} v_i\phi_i(\bm{\xi}) \in \mathcal{S}_\mathbf{N}$ by $\mathbf{v}=(v_1,...,v_\mathbf{N} )^T \in \mathbb{R}^\mathbf{N}$.
Let $\mathbf{z}=(z_1,...,z_M )^T \in \mathbb{R}^M$ be the vector of the evaluations for  $\{w(\bm{\xi}^{(i)})\}_{i=1}^M$ and $\Phi=(\varPhi_1,\cdots,\varPhi_\mathbf{N})\in\mathbb{R}^{M\times \mathbf{N}}$ the matrix with components $(\Phi)_{i,j}=\phi_j(\bm{\xi}^{(i)})$. The ordinary least-squares method can be written as
 \begin{eqnarray}
\label{least-squares matrix}
\begin{split}
%\mathbf{w}=\arg\min_{\mathbf{v}\in \mathbb{R}^\mathbf{N}}\|\mathbf{z}-\Phi \mathbf{v}\|_2^2 ~~\text{with}~~w_\mathbf{N}=\sum_{i=1}^\mathbf{N} w_i\phi_i(\bm{\xi}),
\mathbf{w}=\arg\min_{\mathbf{v}\in \mathbb{R}^\mathbf{N}}\|\mathbf{z}-\Phi \mathbf{v}\|_2^2,
\end{split}
\end{eqnarray}
 and the algebraic version of regularized least-squares problem can be written as follows:
 \begin{eqnarray}
\label{regularized least-squares matrix}
\begin{split}
J(\mathbf{w})=\min_{\mathbf{v}\in \mathbb{R}^\mathbf{N}}J(\mathbf{v}), ~~\text{where}~~J(\mathbf{v})=\|\mathbf{z}-\Phi \mathbf{v}\|_2^2+\lambda L(\mathbf{v}),
\end{split}
\end{eqnarray}
 where $L(\mathbf{v})$ is a function corresponding to $\mathcal{L}(v)$.

Additional information such as smoothness and  sparsity can be provided through regularization. We can obtain some special solutions by solving problem (\ref{regularized least-squares method}) with some assumptions on the regularization function.  The choice of regularization parameter $\lambda$ is crucial for solving (\ref{regularized least-squares matrix}). In this paper, we use cross validation to select an optimal value of $\lambda$.

Suppose that an approximation $\sum_{i=1}^\mathbf{N} w_i\phi_i(\bm{\xi})$ of a function $w(\bm{\xi})$ is sparse with respect to the basis $\{\phi_j\}_{j=1}^\mathbf{N}$. By sparse regularization, we can find the sparse approximation using  $M$ ($M \ll \mathbf{N}$) realizations of $w(\bm{\xi})$.
An optimal $m-$sparse approximation of $w$ can be obtained by solving the constrained optimization problem
\begin{eqnarray}
\label{sparse-optimization-0}
\begin{split}
\min_{\mathbf{v}\in \mathbb{R}^\mathbf{N}}\|\mathbf{z}-\Phi \mathbf{v}\|_2^2 ~~\text{subject to}~~\|\mathbf{v}\|_0\leq m,
\end{split}
\end{eqnarray}
 where
 \[
 \|\mathbf{v}\|_0=\sharp\{i\in\{1,\cdots,\mathbf{N}\}: v_i\neq0\}
 \]
gives the number of nonzero components of $\mathbf{v}$. In general, the optimization problem (\ref{sparse-optimization-0}) is an NP-hard problem. With the so-called restricted isometry property (RIP), (\ref{sparse-optimization-0}) can be approximated by the following convex optimization problem:
\begin{eqnarray}
\label{sparse-optimization-1}
\begin{split}
\min_{\mathbf{v}\in \mathbb{R}^\mathbf{N}}\|\mathbf{z}-\Phi \mathbf{v}\|_2^2 ~~\text{subject to}~~\|\mathbf{v}\|_1\leq \delta,
\end{split}
\end{eqnarray}
where $\|\mathbf{v}\|_1$ is the $l_1-$norm of $\mathbf{v}$. Since the convexity of $l_1-$norm,  we can consider the equivalent optimization problem of (\ref{sparse-optimization-1}), known as Lasso problem:
\begin{eqnarray}
\label{lasso}
\begin{split}
\min_{\mathbf{v}\in \mathbb{R}^\mathbf{N}}\lambda\|\mathbf{v}\|_1+\frac{1}{2}\|\mathbf{z}-\Phi \mathbf{v}\|_2^2,
\end{split}
\end{eqnarray}
where $\lambda$ corresponds to Lagrange multiplier and is related to $\delta$. There are several optimization algorithms for solving (\ref{lasso}). In this paper, we introduce an improved least angle regression algorithm (ILARS) based on sub-gradient for the lasso problem.

\subsection{ILARS for lasso problem}
\label{mlars}

Let $L(\mathbf{v})=\|\mathbf{v}\|_1$. By equation (\ref{regularized least-squares matrix}), we have
\begin{eqnarray}
\label{lasso-eq}
\begin{split}
J(\mathbf{v})=\lambda\|\mathbf{v}\|_1+\frac{1}{2}\|\mathbf{z}-\Phi \mathbf{v}\|_2^2.
\end{split}
\end{eqnarray}
The sub-gradient set is given by the set of all vectors
\begin{eqnarray}
\label{sub-gradient}
\begin{split}
\partial J(\mathbf{v}):=\lambda\mathbf{s}+\Phi^T(\Phi \mathbf{v}-\mathbf{z}),
\end{split}
\end{eqnarray}
where
\begin{eqnarray*}
\mathbf{s}(i)&=
\begin{cases}
\begin{split}
+1,& ~\mathbf{v}(i)> 0, \\
[-1,+1],&~ \mathbf{v}(i)=0, ~~i=1,\cdots,\mathbf{N},\\
-1,& ~\mathbf{v}(i)< 0.
\end{split}
\end{cases}
\end{eqnarray*}
We want to seek $\mathbf{v}$ and $\mathbf{s}$ such that $0 \in \partial J(\mathbf{v})$.
Combining with the sub-gradient requirement, we now introduce an improved  least angle regression method, which is described in Algorithm \ref{algorithm-ILARS}.

\begin{thm}
\label{theorem-lambda}
In Algorithm \ref{algorithm-ILARS}, if we have gotten the vector $\mathbf{s}$, the solution support $ S$, and the sparse solution $\mathbf{v}$ at the $(k-1)_{\text{th}}$ iteration, then the regularization parameter at the $k_{\text{th}}$ iteration should be taken such that
\[
\lambda=\|(\Phi^T(\mathbf{z}-\Phi\mathbf{v}))_{S^{c}}\|_\infty,
\]
where $S^{c}$ is the complementary set of $S$.
\end{thm}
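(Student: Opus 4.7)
\noindent\textit{Proof plan.} The proof should rest entirely on the first-order (KKT) characterization of lasso minimizers: by (\ref{lasso-eq})--(\ref{sub-gradient}), $\mathbf{v}$ solves the lasso at parameter $\lambda$ if and only if there exists $\mathbf{s}\in\partial\|\mathbf{v}\|_1$ satisfying
\[
\Phi^T(\mathbf{z}-\Phi\mathbf{v}) \;=\; \lambda\,\mathbf{s}.
\]
The first step I would carry out is to split this identity along the support $S=\{i:v_i\neq 0\}$ and its complement $S^c$. On $S$ the subdifferential of $|\cdot|$ is single-valued, forcing the equalities $(\Phi^T(\mathbf{z}-\Phi\mathbf{v}))_i=\lambda\,\mathrm{sign}(v_i)$. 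On $S^c$ the iterate vanishes, $s_i\in[-1,1]$, and the condition reduces to the coordinate-wise inequality $|(\Phi^T(\mathbf{z}-\Phi\mathbf{v}))_i|\le\lambda$ for every $i\in S^c$.

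The second step is to interpret what this decomposition forces as $\lambda$ is lowered from its value at iteration $k-1$. Treating the iterate $\mathbf{v}$, the support $S$, and the sign vector $\mathbf{s}$ from step $k-1$ as the given data, the residual correlation $\Phi^T(\mathbf{z}-\Phi\mathbf{v})$ is a fixed vector; the $S^c$-inequalities remain strict exactly while $\lambda>\|(\Phi^T(\mathbf{z}-\Phi\mathbf{v}))_{S^c}\|_\infty$, so no index outside $S$ is activated and the support cannot grow. At the critical value $\lambda=\|(\Phi^T(\mathbf{z}-\Phi\mathbf{v}))_{S^c}\|_\infty$ the coordinate $i^\star$ that realizes the $\ell_\infty$ maximum saturates its inequality, becoming eligible to enter $S$; for any strictly smaller $\lambda$ the KKT condition at $i^\star$ is violated, so the current state ceases to be optimal. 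Hence this $\ell_\infty$ value is precisely the largest $\lambda$ at which exactly one new atom is forced into the support at iteration $k$, which is the formula asserted by the theorem.

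\noindent\textit{Main obstacle.} The subtle point is bridging between the two roles played by $\mathbf{v}$: in the formula $\mathbf{v}$ is the iterate from step $k-1$, whereas in the KKT characterization at the new $\lambda$ it plays the role of a minimizer. Reconciling these hinges on the active-set invariance built into a single ILARS step, namely that the equalities $(\Phi^T(\mathbf{z}-\Phi\mathbf{v}))_i=\lambda\,s_i$ on $S$ and the sign pattern $\mathbf{s}|_S$ are preserved as $\lambda$ is decreased through one breakpoint, so the only KKT condition that can first fail is on $S^c$, with failure threshold captured by the displayed $\ell_\infty$ expression. Once this homotopy invariance is acknowledged, the stated $\lambda$ follows directly from the sub-gradient decomposition and no additional algebra is required.
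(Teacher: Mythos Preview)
Your proposal is correct and follows essentially the same approach as the paper: both arguments start from the sub-gradient optimality condition $\Phi^T(\mathbf{z}-\Phi\mathbf{v})=\lambda\mathbf{s}$, split it over $S$ and $S^c$, observe that the constraints on $S$ are automatically satisfied since $|\mathbf{s}_S|=\mathbf{1}$, and then identify the critical $\lambda$ as the smallest value for which the $S^c$-constraints $|\mathbf{s}(i)|\le 1$ still hold, namely $\|(\Phi^T(\mathbf{z}-\Phi\mathbf{v}))_{S^c}\|_\infty$. The paper confirms minimality by a short contradiction (any smaller $\lambda_0$ would force $|\mathbf{s}(j_0)|=\lambda/\lambda_0>1$ at the maximizing index), which is the same observation you phrase as ``for any strictly smaller $\lambda$ the KKT condition at $i^\star$ is violated.'' Your discussion of the homotopy/active-set invariance is more elaborate than what the paper actually does---the paper simply treats the $(k-1)$th iterate $\mathbf{v}$ as fixed data and asks for the smallest $\lambda$ making $\mathbf{s}\in[-1,1]^{\mathbf N}$, without addressing whether that $\mathbf{v}$ remains a minimizer along the path---so your ``main obstacle'' paragraph, while valid, goes beyond what is required to match the paper's argument.
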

\begin{proof}
By equation (\ref{sub-gradient}) and the requirement $0 \in \partial J(\mathbf{v})$, we get
\begin{eqnarray*}
\label{lambda1}
\begin{split}
\lambda\mathbf{s}+\Phi^T(\Phi \mathbf{v}-\mathbf{z})=0.
\end{split}
\end{eqnarray*}
Then
\[
\mathbf{s}=\frac{\Phi^T(\mathbf{z}-\Phi\mathbf{v})}{\lambda}.
\]
 In order to make $\mathbf{s}$ satisfy the condition  in equation (\ref{sub-gradient}), i.e.,  all the entries in $\mathbf{s}$ are in the range $[ -1, 1]$, we should take  $\lambda$ such that
\[
\lambda=\min\bigg\{\hat{\lambda}: -1\leq\mathbf{s}(i)=\frac{\varPhi_i^T(\mathbf{z}-\Phi\mathbf{v})}{\hat{\lambda}}\leq 1, i=1,\cdots,N\bigg\}.
\]
Let $\mathbf{I}\in \mathbb{R}^{\mathbf{N}\times1}$ be a vector with all the entries being $1$. We note that $\mathbf{s}_S=\text{sign}(\mathbf{v}_S)$, i.e., $|\mathbf{s}_S|=\mathbf{I}_S$ is independent of $\lambda$ on the support $S$.
 Thus we have
 \[
 \lambda=\|(\Phi^T(\mathbf{z}-\Phi\mathbf{v}))_{S^{c}}\|_\infty.
 \]
 Let
 \[
 \mathbf{E}_{\lambda}:=\bigg\{\hat{\lambda}: -1\leq\mathbf{s}(i)=\frac{\varPhi_i^T(\mathbf{z}-\Phi\mathbf{v})}{\hat{\lambda}}\leq 1, i=1,\cdots,N\bigg\}.
 \]
  It is obvious that $\lambda=\|(\Phi^T(\mathbf{z}-\Phi\mathbf{v}))_{S^{c}}\|_\infty\in \mathbf{E}_{\lambda}$. It remains to prove   that $\lambda=\|(\Phi^T(\mathbf{z}-\Phi\mathbf{v}))_{S^{c}}\|_\infty$ is the smallest one in the set  $\mathbf{E}_{\lambda}$.  Suppose  that there exists a $\lambda_0\in \mathbf{E}_{\lambda}$ such that $\lambda_0 \leq \lambda$. Let $j_0=\arg\max_{j}\{|\varPhi_j^T(\mathbf{z}-\Phi \mathbf{v})|, j\in S^{c}\}$. Thus $\mathbf{s}(j_0)=\frac{\|(\Phi^T(\mathbf{z}-\Phi\mathbf{v}))_{S^{c}}\|_\infty}{\lambda_0}=\frac{\lambda}{\lambda_0}>1$. This contradicts with the fact  that $\mathbf{s}$ are in the range $[ -1, 1]$. So we conclude that $\lambda=\|(\Phi^T(\mathbf{z}-\Phi\mathbf{v}))_{S^{c}}\|_\infty$.
\end{proof}

\begin{algorithm}
\caption{Improved least angle regression for lasso problem}
     \textbf{Input}: A matrix $\Phi$, the vector $\mathbf{z}$ and the given regularization parameter $\lambda_0$\\
     \textbf{Output}: The sparse solution $\mathbf{v}$, the solution support $ S$, and the approximation \\ $w_{\mathbf{N}}=\sum_{i=1}^{\mathbf{N}}\mathbf{v}(i)\phi_i(\bm{\xi})$ \\
      ~1:~~Initialize the solution $\mathbf{v}=0$, $\lambda=\|\Phi^T(\mathbf{z})\|_\infty$, and the solution support $S={\O}$, \\
      $~~~~~$the iteration counter $k=1$;\\
      ~2:~~Take $\lambda=\|(\Phi^T(\mathbf{z}-\Phi\mathbf{v}))_{S^{c}}\|_\infty$ and $S=S\cup\arg\max_{j}\{|\varPhi_j^T(\mathbf{z}-\Phi \mathbf{v})|, j\in S^{c}\}$; \\
      ~3:~~Update the solution with $\mathbf{v}_S=(\Phi_S^T\Phi_S)^{-1}(\Phi_S^T\mathbf{z}-\lambda\mathbf{s}_S)$ on the support, where $\mathbf{s}_S=\text{sign}(\mathbf{v}_S)$; \\
      ~4:~~If $i\in S$, s.t. $v_i=0$, let $S=S\setminus\{i\in S;v_i=0\}$; \\
      ~5:~~$k\rightarrow k+1$;\\
      ~6:~~Return to Step 2 if $\lambda >\lambda_0$, otherwise \textbf{terminate};\\
      ~7:~~$\mathbf{v}_S=(\Phi_S^T\Phi_S)^{-1}(\Phi_S^T\mathbf{z})$, $w_{\mathbf{N}}=\sum_{i=1}^{\mathbf{N}}\mathbf{v}(i)\phi_i(\bm{\xi})$.
      \label{algorithm-ILARS}
\end{algorithm}

At each iteration in Algorithm \ref{algorithm-ILARS}, the choice of regularization parameter $\lambda$ is based on Theorem \ref{theorem-lambda}.
We note that the number of steps required by ILARS is no more than the dimensions of $\mathbf{v}$.

 In order to select the optimal regularization parameter $\lambda_0$, the classical $k-$fold cross validation method is usually considered. However, the $k-$fold cross validation method may be time-consuming. This may be computationally expensive when applying an iterative strategy, which is the case in the paper. To overcome this difficulty, we apply a fast leave-one-out method \cite{chevreuil2015least} to determine optimal ILARS solution that only requires a single call to the Algorithm \ref{algorithm-ILARS} with a proper regularization parameter.
The fast leave-one-out method is described in Algorithm \ref{algorithm-FILARS}. For simplicity of presentation, we use the abbreviation FILARS to denote ILARS using fast leave-one-out method.

\begin{algorithm}
\caption{Fast leave-one-out method to determine optimal ILARS solution}
     \textbf{Input}: A matrix $\Phi$, the vector $\mathbf{z}$, and the given relatively small regularization parameter $\lambda_0$\\
     \textbf{Output}: The optimal ILARS solution $\mathbf{v}$, the solution support $S$, and the approximation \\ $w_{\mathbf{N}}=\sum_{i=1}^{\mathbf{N}}\mathbf{v}(i)\phi_i(\bm{\xi})$ \\
      ~1:~~Run Algorithm \ref{algorithm-ILARS} with $\lambda_0$ one time to obtain $k$ solutions $\mathbf{v}_1,\cdots,\mathbf{v}_k$, with corresponding \\
      $~~~~~$sets of nonzero coefficients $S_1,\cdots,S_k$;\\
      ~2:~$~$\textbf{for} $j=1,\cdots,k$ \textbf{do}\\
      ~3: ~~~~Correct the nonzero coefficients $\mathbf{v}_j$ with $\mathbf{v}_{S_j}^j=(\Phi_{S_j}^T\Phi_{S_j})^{-1}(\Phi_{S_j}^T\mathbf{z})$;\\
      ~4:~~~~~Compute $h_q=(\Phi_{S_j}(\Phi_{S_j}^T\Phi_{S_j})^{-1}\Phi_{S_j}^T)_{qq}$; \\
      ~5:~~~~~Compute relative leave-one-out error $\epsilon_j=\frac{1}{M}\sum_{q=1}^M\bigg(\frac{(\mathbf{z})_q-\Phi_{S_j}\mathbf{v}_{S_j}^j}{(1-h_q)\sigma(\mathbf{z})}\bigg)^2$, where $\sigma(\mathbf{z})$ is the\\
      ~6:~~~~~empirical standard deviation of $\mathbf{z}$; \\
      ~7:~$~$\textbf{end for}\\
      ~8:~~Select optimal solution $\mathbf{v}$ such that $\mathbf{v}=\mathbf{v}_{S_{j^*}}^{j^*}$ with $j^*=\arg\min_j \epsilon_j$.
      \label{algorithm-FILARS}
\end{algorithm}

When dealing with the problems in a high stochastic dimension space, the dimension of approximation space $\mathcal{S}_\mathbf{N}$ grows exponentially with the dimension  of the stochastic variable $\bm{\xi}$, which makes it difficulty to get a good approximation of the model output $w(\bm{\xi})$ by FILARS directly.
To overcome the difficulty, in the next section  we will introduce a hierarchical sparse low rank tensor approximation based on the FILARS method by decomposing  a high dimensional stochastic problem   into some low dimensional
stochastic problems.

%%%%%%%%%%%%%%%%%%%%%%%%%%%%%%%%%%%%%%%%%%%%%%%%%%%%%%%%%%%%%%%%%%%%
\section{Hierarchical sparse low rank tensor approximation}
\label{hierarchical-low-rank-app}

In this section, we introduce the HSLRTA method, which can be used to approximating $\zeta_i(\bm{\xi})$ in (\ref{exi}) with high  dimensional random variable $\bm{\xi}$.
We attempt to seek a sparse rank-$m$ approximation of the model output $w(\bm{\xi})$ under the form
\begin{eqnarray}
\label{hi-low-rank}
 w_\mathbf{N}(\bm{\xi})=\sum_{i=1}^{m}\alpha_iv_i(\bm{\xi})=\sum_{i=1}^{m}\alpha_i\prod_{k=1}^rw^{(i,k)}(\bm{\xi}_k),
\end{eqnarray}
in the finite dimensional tensor space $\mathcal{S}_\mathbf{n}=\mathcal{S}_{n_1}^1\otimes\cdots\otimes\mathcal{S}_{n_r}^r$, where
\[
w^{(i,k)}(\bm{\xi}_k)=\sum_{j=1}^{n_k}w_j^{(i,k)}\phi_j^k(\bm{\xi}_k)=\bm{\phi}^k\mathbf{w}^{(i,k)}\in \mathcal{S}_{n_k}^k ~~\text{for} ~~i=1,\cdots,m.
\]
Here $\mathbf{w}^{(i,k)}$ denotes the vector of coefficients of $w^{(i,k)}$ with
only a few nonzero coefficients, and $\bm{\phi}^k=(\phi_1^k,\cdots,\phi_{n_k}^k)$ denotes the vector of basis functions.
In Subsection \ref{S-low-rank-tensor}, we construct such an approximation by successively computing sparse rank-one approximation, i.e., $v_i(\bm{\xi})=\prod_{k=1}^rw^{(i,k)}(\bm{\xi}_k)$.

\subsection{Sparse low rank tensor approximation}
\label{S-low-rank-tensor}
We denote $\mathcal{R}_1$ the set of (elementary) rank-one tensors in $\mathcal{S}_\mathbf{n}=\mathcal{S}_{n_1}^1\otimes\cdots\otimes\mathcal{S}_{n_r}^r$, i.e.,
\[
\mathcal{R}_1=\bigg\{v=\big(v^{(1)}\otimes\cdots\otimes v^{(r)}\big)(\bm{\xi})=\prod_{k=1}^rv^{(k)}(\bm{\xi}_k)=\prod_{k=1}^r\bm{\phi}^k\mathbf{v}^k: v^{(k)}(\bm{\xi}_k) \in\mathcal{S}_{n_k}^k \bigg\},
\]
where $\mathbf{v}^k \in \mathbb{R}^{n_k}$ denotes the vector of coefficients of $v^{(k)}$.
Let $\mathcal{R}_m$ be the set of (elementary) rank-$m$ tensors
\[
\mathcal{R}_m=\bigg\{\sum_{i=1}^{m}\alpha_iv_i(\bm{\xi}): v_i(\bm{\xi}) \in \mathcal{R}_1 \bigg\}.
\]
For $k=1,\cdots,r$, $\bm{\phi}^k=(\phi_1^k,\cdots,\phi_{n_k}^k)$ denotes the vector of basis functions.
We compute a sparse rank-one approximation $v=\prod_{k=1}^rv^{(k)}(\bm{\xi}_k)\in \mathcal{R}_1$ of $w$ by solving the following $l_1$-optimization problem:
\begin{eqnarray}
\label{lowrank-optimization-1}
\begin{split}
\min_{v\in \mathcal{R}_1}\|w-v\|_{L^2}^2~~\text{subject to}~~\|\mathbf{v}^1\|_1\leq \delta_1,\cdots,\|\mathbf{v}^r\|_1\leq \delta_r,
\end{split}
\end{eqnarray}
where $\mathbf{v}^1\in \mathbb{R}^{n_1},\cdots,\mathbf{v}^r\in \mathbb{R}^{n_r}$ and $v=(\bm{\phi}^1(\bm{\xi}_1)\mathbf{v}^1)\otimes\cdots\otimes(\bm{\phi}^{(r)}(\bm{\xi}_r)\mathbf{v}^r)$.
We can consider the following optimization problem equivalent to  (\ref{lowrank-optimization-1}),
\begin{eqnarray}
\label{lowrank-optimization-1lambda}
\begin{split}
\min_{v\in \mathcal{R}_1}\|w-v\|_{L^2}^2+\sum_{k=1}^r\lambda_k\|\mathbf{v}^k\|_1,
\end{split}
\end{eqnarray}
where the regularization parameters $\lambda_k > 0$ (Lagrange multipliers) are related to $\delta_k$.
We can solve the optimization problem (\ref{lowrank-optimization-1lambda}) by an alternating minimization algorithm.

For $k=1,\cdots,r-1$, we are devoted to constructing the vector $\mathbf{v}^k$ of coefficients of $v^{(k)}$ by solving the following optimization problem
 \begin{eqnarray}
\label{k-optimization-1lambda}
\begin{split}
\min_{\mathbf{v}^k\in \mathbb{R}^{n_k}}\|\mathbf{z}^k-\bm{\Phi}^{(k)}\mathbf{v}^k\|_2^2+\lambda_k\|\mathbf{v}^k\|_1,
\end{split}
\end{eqnarray}
where $\mathbf{z}^k\in \mathbb{R}^{n_k}$ denotes the vector of random evaluations of $w(\bm{\xi})$ corresponding to $M^k$ samples of $\bm{\xi}$ such that, for each $j\in\{i\in\{1,\cdots,r\}: i\neq k$\}, $\bm{\hat{\xi}}_j$ is a fixed sample of  dimension $d_j$, and $\{\bm{\xi}_k^{(i)}\}_{i=1}^{M^k}\in\Omega_k$
 are $M^k$ different samples of  dimension $d_k$. The $\ell_1$ optimization problem  (\ref{k-optimization-1lambda})
  can be solved by FILARS in Algorithm \ref{algorithm-FILARS}. Suppose  that $\hat{\mathbf{v}}^k$ is the sparse solution of the optimization problem (\ref{k-optimization-1lambda}), then we let $w^{(k)}(\bm{\xi}_k)=\bm{\phi}^k(\bm{\xi}_k)\hat{\mathbf{v}}^k$.
 For $k=r$, we solve the following optimization problem
 \begin{eqnarray}
\label{k-optimization-rlambda}
\begin{split}
\min_{\mathbf{v}^r\in \mathbb{R}^{n_r}}\|\mathbf{z}^r-\bm{\hat{\Phi}}^{(r)}\mathbf{v}^r\|_2^2+\lambda_r\|\mathbf{v}^r\|_1,
\end{split}
\end{eqnarray}
which is used to  construct the vector of coefficients of $v^{(r)}$, i.e., $\mathbf{v}^r\in \mathbb{R}^{n_r}$,
 where $\mathbf{z}^r\in \mathbb{R}^{M^r}$ denotes the vector of random evaluations of $w(\bm{\xi})$ corresponding to $M^r$ samples of $\bm{\xi}$ such
 that, for each $j\in\{1,\cdots,r-1\}$, $\bm{\hat{\xi}}_j$ is a fixed sample of  dimension $d_j$, and $\{\bm{\xi}_r^{(i)}\}_{i=1}^{M^r}$
 are $M^r$ different samples of  dimension $d_r$, and $\bm{\hat{\Phi}}^{(r)}$ is the matrix with components
\begin{eqnarray}
\label{matrix-r}
\begin{split}
 (\bm{\hat{\Phi}}^{(r)})_{i,j}=(\bm{\Phi}^{(r)})_{i,j}\prod_{k=1}^{r-1}w^{(k)}(\bm{\hat{\xi}}_k).
\end{split}
\end{eqnarray}
Suppose $\hat{\mathbf{v}}^r$ is the sparse solution of the optimization problem (\ref{k-optimization-rlambda}), which is  solved by FILARS in Algorithm \ref{algorithm-FILARS}.
We take $w^{(r)}(\bm{\xi}_r)=\bm{\phi}^r(\bm{\xi}_r)\hat{\mathbf{v}}^r$.
Then the sparse rank-one approximations  $w_1(\bm{\xi})$ of $w(\bm{\xi})$ can be expressed by
\[
w_1(\bm{\xi})=\prod_{k=1}^{r}w^{(k)}(\bm{\xi}_k)=\prod_{k=1}^{r}\bm{\phi}^k(\bm{\xi}_k)\hat{\mathbf{v}}^k.
\]
We summarize the main steps to construct  a sparse rank-one approximation in Algorithm \ref{algorithm-rank1}.

\begin{algorithm}
\caption{The construction of a sparse rank-one approximation of a  model output $w(\bm{\xi})$}
     \textbf{Input}: Vectors of evaluations $\{\mathbf{z}^k\}_{k=1}^r$, basis matrices $\{\bm{\Phi}^{(k)}\}_{k=1}^{r-1}$\\
     \textbf{Output}: The sparse rank-one approximations $w_1(\bm{\xi})=\prod_{k=1}^{r}\bm{\phi}^k(\bm{\xi}_k)\hat{\mathbf{v}}^k$\\
     % ~1:~$~$\textbf{Initialization}: $N=1$, $V_H^{0,CV}={\O}$; \\
      ~1:~$~$\textbf{for} $k=1,\cdots,r-1$ \textbf{do}\\
      ~2: ~~~~Select the optimal regularization parameter $\lambda_k$ using modified cross validation;\\
      ~3:~~~~~Solve the optimization problem (\ref{k-optimization-1lambda}) by  Algorithm \ref{algorithm-FILARS} with $\mathbf{z}^k$ and\\ $~~~~~~~~$$\bm{\Phi}^{(k)}$ to obtain the optimal sparse solution $\hat{\mathbf{v}}^k$ and $w^{(k)}(\bm{\xi}_k)=\bm{\phi}^k(\bm{\xi}_k)\hat{\mathbf{v}}^k$;\\
      ~4:~$~$\textbf{end for}\\
      ~5:~~Construct the matrix $\bm{\hat{\Phi}}^{(r)}$ with equation (\ref{matrix-r}); \\
      ~6:~~Solve the optimization problem (\ref{k-optimization-rlambda}) by   Algorithm \ref{algorithm-FILARS} with $\mathbf{z}^r$ and $\bm{\Phi}^{(r)}$ \\
      $~~~~~$to obtain the optimal sparse solution $\hat{\mathbf{v}}^r$ and $w^{(r)}(\bm{\xi}_k)=\bm{\phi}^r(\bm{\xi}_r)\hat{\mathbf{v}}^k$; \\
      ~7:~~Get the sparse rank-one approximations of $w(\bm{\xi})$: \\
      $~~~~~$$w_1(\bm{\xi})=\prod_{k=1}^{r}w^{(k)}(\bm{\xi}_k)=\prod_{k=1}^{r}\bm{\phi}^k(\bm{\xi}_k)\hat{\mathbf{v}}^k$.\\
      \label{algorithm-rank1}
\end{algorithm}
\begin{rem}
We can get different rank-one approximations by changing the  regularization of the optimization problem
(\ref{k-optimization-1lambda}) and (\ref{k-optimization-rlambda}), the ordinary  least-squares and regularized least-squares
 will be used in the numerical examples in
 Section \ref{numerical examples}.
\end{rem}

 Now we want to construct a sparse rank-$m$ approximation $w_m\in \mathcal{R}_m$ of $w$ under the form (\ref{hi-low-rank}). Suppose $w_0=0$, and the approximation
$w_{i-1}$ of $w$ is given. Such an approximation can be constructed by successively computing the sparse rank-one approximation problems as follows:
for $i=1,\cdots,m$,
\begin{eqnarray}
\label{lowrank-optimization-mlambda}
\begin{split}
\min_{v\in \mathcal{R}_1}\|w-&w_{i-1}-v\|_{L^2}^2+\sum_{k=1}^r\lambda_k\|\mathbf{v}^k\|_1,\\
\text{where}~~\mathbf{v}^1\in \mathbb{R}^{n_1},\cdots,\mathbf{v}^r\in &\mathbb{R}^{n_r}~~\text{and}~~ v=(\bm{\phi}^1(\bm{\xi}_1)\mathbf{v}^1)\otimes\cdots\otimes(\bm{\phi}^{r}(\bm{\xi}_r)\mathbf{v}^r).
\end{split}
\end{eqnarray}
Problem (\ref{lowrank-optimization-mlambda}) can be solved by Algorithm \ref{algorithm-rank1}, where $\{\mathbf{z}^k\}_{k=1}^r$ are the vectors of
evaluations of $(w-w_{i-1})(\bm{\xi})$.
We provide the details of the construction of a sparse rank-$m$ approximation in Algorithm \ref{algorithm-rankm}.
\begin{algorithm}
\caption{The construction of a sparse rank-$m$ approximation of a  model output $w(\bm{\xi})$}
     \textbf{Input}: Maximal rank $m$, vectors of evaluations $\{\mathbf{z}^{(k,i)}\}_{k=1, i=1}^{r,m}$, basis matrices $\{\bm{\Phi}^{(k)}\}_{k=1}^{r}$,\\
     and and the error tolerance $\varepsilon^*$\\
     \textbf{Output}: The sparse rank-$m$ approximations: \\
     $w_m(\bm{\xi})=\sum_{i=1}^mv_i(\bm{\xi})=\sum_{i=1}^m\prod_{k=1}^{r}\bm{\phi}^{(k,i)}(\bm{\xi}_k)\hat{\mathbf{v}}^{(k,i)}$\\
      ~1:~$~$\textbf{Initialization}: Set $w_0=0$; \\
      ~2:~$~$\textbf{for} $i=1,\cdots,m$ \textbf{do}\\
      ~3: ~~~~Evaluate the vectors $\{\mathbf{z}^k_{i-1}\}_{k=1}^{r}$ of evaluations of $w_{i-1}$;\\
      ~4:~~~~~Compute the sparse rank-one approximation $v_i=\prod_{k=1}^{r}\bm{\phi}^{(k,i)}(\bm{\xi}_k)\hat{\mathbf{v}}^{(k,i)}$ by Algorithm \ref{algorithm-rank1} \\
      $~~~~~~~~$with $\{\mathbf{z}^{(k,i)}-\mathbf{z}^k_{i-1}\}_{k=1}^r$ and basis matrices $\{\bm{\Phi}^{(k)}\}_{k=1}^{r-1}$;\\
      ~5:~~~~Get the sparse rank-$i$ approximations $w_i(\bm{\xi})=w_{i-1}(\bm{\xi})+v_{i}(\bm{\xi})$ of $w$;\\
      ~7:~~~~Set $\varepsilon=E(v_{i}(\bm{\xi}))$;\\
      ~8:~~~~\textbf{if}  $\varepsilon < \varepsilon^{*}$;\\
      ~9~~~~~~~$m=i$;\\
      ~10:~~~\textbf{end if}\\
      ~6:~$~$\textbf{end for}\\
      ~7:~~Get the sparse rank-$m$ approximations of $w(\bm{\xi})$:
        $w_m(\bm{\xi})=\sum_{i=1}^m\prod_{k=1}^{r}\bm{\phi}^{(k,i)}(\bm{\xi}_k)\hat{\mathbf{v}}^{(k,i)}$.\\
      \label{algorithm-rankm}
\end{algorithm}

\begin{rem}
Once the sequence of sparse rank-one approximations $\{v_i\}_{i=1}^M$ have been computed, $w_m=\sum_{i=1}^mv_i(\bm{\xi})$ usually gives
a good approximation. If better approximation is required,  we can make a correction for the sparse rank-$m$ approximation, i.e.,
take $w_m=\sum_{i=1}^m\beta_iv_i(\bm{\xi})$ instead of $w_m=\sum_{i=1}^mv_i(\bm{\xi})$. Here $w_m=\sum_{i=1}^m\beta_iv_i(\bm{\xi})$ can be computed by solving the following optimization problem:
\[
\min_{\bm{\beta}\in \mathbb{R}^m}\|w-\sum_{i=1}^m\beta_iv_i(\bm{\xi})\|_1+\lambda\|\bm{\beta}\|_1.
\]
\end{rem}

\begin{rem}
As for the choice of samples of $\bm{\xi}$ and the construction of the inputs in Algorithm \ref{algorithm-rankm},
we firstly  take $\{\bm{\xi}_k^{(i)}\}_{i=1}^{M^k}\in\Omega_k$
to construct the basis matrices
$\{\bm{\Phi}^{(k)}\}_{k=1}^r$,   and then  take $m$ samples $\bm{\hat{\xi}}^{(i)}=(\bm{\hat{\xi}}_1^{(i)},\cdots,\bm{\hat{\xi}}_r^{(i)}),~i=1,\cdots, m$. For each $i=1,\cdots,m$ and each $k=1,\cdots,r$, we
construct $\mathbf{z}^{(k,i)}$ based on a set of samples such that
$\bm{\xi}^{(j)}=(\bm{\hat{\xi}}_1^{(i)},\cdots,\bm{\hat{\xi}}_1^{(j-1)}, \bm{\xi}_k^{(j)}, \bm{\hat{\xi}}_1^{(j+1)}\cdots,  \bm{\hat{\xi}}_r^{(i)})$, $j=1, \cdots, M_k$.
\end{rem}

\begin{rem}
If the iteration procedure is terminated by the maximal rank $m$ instead of the error tolerance $\varepsilon^*$, Algorithm \ref{algorithm-rankm}
may  not give  a better approximation by a tensor approximation with a rank higher than $m$.
 For this situation, we can select an optimal  rank using
cross validation method.
\end{rem}

We may not get a good approximation of the model output $w(\bm{\xi})$ by  Algorithm \ref{algorithm-rankm} in general when the best rank $m$
is large and $r$ (the number of subsets of the random variables $\bm{\xi}$) is larger than $3$. In order to overcome the difficulty, we introduce
a hierarchical sparse low rank tensor approximation method, where the ``hierarchical" means hierarchical Tucker tensor sets.

%%%%%%%%%%%%%%%%%%%%%%%%%%%%%%%%%%%%%%%%%%%%%%%%%%%%%%%%%%%%%%%%%%%%%%%
\subsection{Hierarchical sparse low rank tensor approximation}
\label{HSLRTA}

In the approach of the hierarchical tensor formulation, we repeatedly  use  the concept of tensor subspaces in higher levels, and  divide the subspaces
in a hierarchical manner  so that the dimension is reduced. The recursive use of the subspaces  leads to a  tree structure describing the hierarchy
of subspaces. Here we consider the linear tree $T_D^{TT}$ depicted in Figure \ref{fig-hierical}.
\begin{figure}[htbp]
\centering
  \includegraphics[width=5 in, height=2.3in]{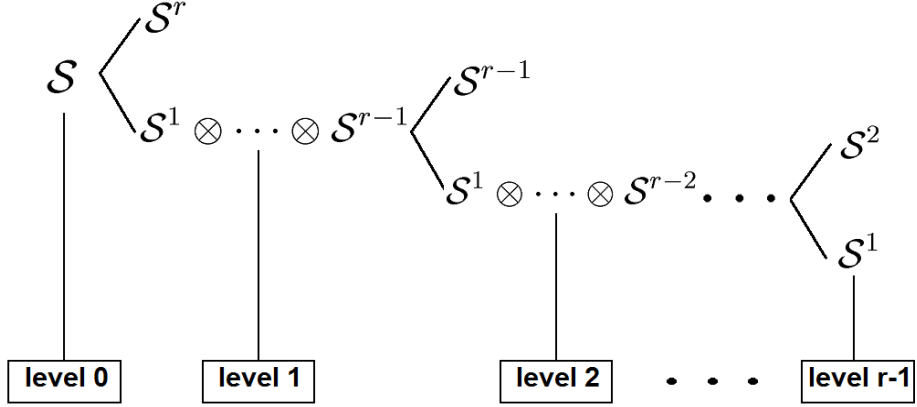}
  \caption{The linear tree $T_D^{TT}$.}
  \label{fig-hierical}
\end{figure}
In this case,  the largest level number (depth of the tree) is $r-1$. Without loss of the generality, we present the HSLRTA for a generic multivariate function $w(\bm{\xi}_1,\cdots,\bm{\xi}_r)\in \mathcal{S}$.
Based on the structure of  hierarchical tensor, the idea of HSLRTA can be described  as follows.

Given $\mathcal{S}=\mathcal{X}_1\otimes\mathcal{S}^r$, where  $\mathcal{X}_1=\mathcal{S}^1\otimes\cdots\otimes\mathcal{S}^{r-1}$,
then $w(\bm{\xi}_1,\cdots,\bm{\xi}_r)$ can be approximated by
\begin{eqnarray}
\label{Hlowrank-1}
\begin{split}
w(\bm{\xi}_1,\cdots,\bm{\xi}_r)=\sum_{i_1=1}^{m_1}w_{1}^{i_1}(\bm{\xi}_1,\cdots,\bm{\xi}_{r-1})v_1^{i_1}(\bm{\xi}_r)~~~~~~\longrightarrow\text{level} ~1.
\end{split}
\end{eqnarray}
 Because $w_{1}^{i_1}(\bm{\xi}_1,\cdots,\bm{\xi}_{r-1})\in \mathcal{X}_1$ ($i_1=1,\cdots,m_1$) and $\mathcal{X}_1=\mathcal{X}_2\otimes\mathcal{S}^{r-1}$ ($\mathcal{X}_2=\mathcal{S}^1\otimes\cdots\otimes\mathcal{S}^{r-2}$), then $w_1^{i_1}(\bm{\xi}_1,\cdots,\bm{\xi}_r)$ can be approximated by
\begin{eqnarray}
\label{Hlowrank-2-1}
\begin{split}
w_1^{i_1}(\bm{\xi}_1,\cdots,\bm{\xi}_{r-1})=\sum_{i_2=1}^{m_2}w_{2}^{i_2}(\bm{\xi}_1,\cdots,\bm{\xi}_{r-2})v_2^{i_2}(\bm{\xi}_{r-1})  ~~~~~~\longrightarrow\text{level} ~2.
\end{split}
\end{eqnarray}
Consequently,  (\ref{Hlowrank-1}) can be rewritten  as
\begin{eqnarray}
\label{Hlowrank-2}
\begin{split}
w(\bm{\xi}_1,\cdots,\bm{\xi}_r)=\sum_{i_1=1}^{m_1}\big(\sum_{i_2=1}^{m_2}w_{2}^{i_2}(\bm{\xi}_1,\cdots,\bm{\xi}_{r-2})v_2^{i_2}(\bm{\xi}_{r-1})\big)^{\{i_1\}}v_1^{i_1}(\bm{\xi}_r).
\end{split}
\end{eqnarray}
We can similarly  get
\begin{eqnarray}
\label{Hlowrank-r-1-1}
\begin{split}
w_{r-2}^{i_{r-2}}(\bm{\xi}_1,\bm{\xi}_2)=\sum_{i_{r-1}=1}^{m_{r-1}}w_{r-1}^{i_{r-1}}(\bm{\xi}_1)v_{r-1}^{i_{r-1}}(\bm{\xi}_2)  ~~~~~~~~\longrightarrow\text{level} ~r-1.
\end{split}
\end{eqnarray}
Thus,  (\ref{Hlowrank-1}) can be expressed by
%\begin{eqnarray}
%\label{Hlowrank-r-1}
%\begin{split}
%&w(\bm{\xi}_1,\cdots,\bm{\xi}_r)=\\
%&\sum_{i_1=1}^{m_1}\bigg(\sum_{i_2=1}^{m_2}\cdots\big(\sum_{i_{r-1}=1}^{m_{r-1}}w_{r-1}^{i_{r-1}}(\bm{\xi}_1)v_{r-1}^{i_{r-1}}
%(\bm{\xi}_2)\big)^{\{i_1,\cdots,i_{r-2}\}}\cdots v_2^{i_2}(\bm{\xi}_{r-1})\bigg)^{\{i_1\}}v_1^{i_1}(\bm{\xi}_r).
%\end{split}
%\end{eqnarray}
\begin{eqnarray}
\label{Hlowrank-r-1}
\begin{split}
&w(\bm{\xi}_1,\cdots,\bm{\xi}_r)=\\
&\sum_{i_1=1}^{m_1}\bigg(\sum_{i_2=1}^{m_2}\cdots\sum_{i_{r-2}=1}^{m_{r-2}}\big(\sum_{i_{r-1}=1}^{m_{r-1}}w_{r-1}^{i_{r-1}}(\bm{\xi}_1)v_{r-1}^{i_{r-1}}
(\bm{\xi}_2)\big)^{\{i_{r-2}\}}v_{r-2}^{i_{r-2}}
(\bm{\xi}_3)\cdots v_2^{i_2}(\bm{\xi}_{r-1})\bigg)^{\{i_1\}}v_1^{i_1}(\bm{\xi}_r).
\end{split}
\end{eqnarray}
In this subsection, we  seek a sparse approximation of $w(\bm{\xi})$ under the form (\ref{Hlowrank-r-1}).

Here we describe the main steps  of HSLRTA as follows:\\
$\bullet$ \textit{Step 1:} At the maximal level $r-1$, for each $k=1,\cdots,m_{r-2}$, we use Algorithm \ref{algorithm-rankm} to compute the sparse rank-$m_{r-1}$ approximation of $w_{r-2}^{k}(\bm{\xi}_1,\bm{\xi}_2)$, i.e.
$$w_{r-2}^{k}(\bm{\xi}_1,\bm{\xi}_2)=\big(\sum_{i_{r-1}=1}^{m_{r-1}}w_{r-1}^{i_{r-1}}(\bm{\xi}_1)v_{r-1}^{i_{r-1}}(\bm{\xi}_2)\big)^{\{k\}}.$$
%(we still denotes by $w_{r-2}^{k}$ for simplicity of notations).\\
$\bullet$  \textit{Step 2:} At the level $r-2$, for each $i_{r-3}=1,\cdots,m_{r-3}$, we just need to solve the following optimization problem \big(suppose  that the sparse rank-$(k-1)$ approximation of $w_{r-3}^{i_{r-3}}(\bm{\xi}_1,\bm{\xi}_2,\bm{\xi}_3)$, i.e., $\hat{w}_{k-1}=\sum_{i=1}^{k-1} w_{r-2}^{i}(\bm{\xi}_1, \bm{\xi}_2)v_{r-2}^{i}(\bm{\xi}_3)$, has been obtained, and start with $\hat{w}_0=0$, where $1\leq k \leq m_{r-2}$.\big)
for the fixed sample $\hat{\xi}_1 \in \Omega_1$, $\hat{\xi}_2 \in \Omega_2$,
 \begin{eqnarray}
\label{H-lowrank-optimization-mlambda}
\begin{split}
\min_{v\in \mathcal{R}_1}\|w_{r-3}^{i_{r-3}}&(\hat{\xi}_1,\hat{\xi}_2,\bm{\xi}_3)-\hat{w}_{k-1}-w_{r-2}^{k}(\hat{\xi}_1,\hat{\xi}_2)v\|_{L^2}^2+\lambda\|\mathbf{v}\|_1,\\
&\text{where}~~\mathbf{v}\in \mathbb{R}^{n_3},~\text{and}~~ v=\bm{\phi}^3(\bm{\xi}_3)\mathbf{v},\\
\end{split}
\end{eqnarray}
to get $v_{r-2}^{k}(\bm{\xi}_3)\bm{\phi}^3(\bm{\xi}_3)\tilde{\mathbf{v}}$, where $\tilde{\mathbf{v}}$ is the solution of the optimization problem (\ref{H-lowrank-optimization-mlambda}),  and get the the sparse rank-$k$ approximation of $w_{r-3}^{i_{r-3}}(\bm{\xi}_1,\bm{\xi}_2,\bm{\xi}_3)$, i.e.,  $\hat{w}_k=\sum_{i=1}^{k} w_{r-2}^{i}(\bm{\xi}_1, \bm{\xi}_2)v_{r-2}^{i}(\bm{\xi}_3)$. Thus we get the sparse rank-$m_{r-2}$ approximation of $w_{r-3}^{i_{r-3}}(\bm{\xi}_1,\bm{\xi}_2,\bm{\xi}_3)$, i.e.,
\[
w_{r-3}^{i_{r-3}}(\bm{\xi}_1,\bm{\xi}_2,\bm{\xi}_3)=\bigg(\sum_{i=1}^{m_{r-2}} \big(\sum_{i_{r-1}=1}^{m_{r-1}}w_{r-1}^{i_{r-1}}(\bm{\xi}_1)v_{r-1}^{i_{r-1}}(\bm{\xi}_2)\big)^{\{i\}} v_{r-2}^{i}(\bm{\xi}_3)\bigg)^{\{i_{r-3}\}}.
\]
$\bullet$ \textit{Step 3:} Repeat the \textit{Step 2} until the procedure is back up to the level 1, and we get the sparse approximation of $w(\bm{\xi})$ under the form (\ref{Hlowrank-r-1}).

\section{Numerical results}
\label{numerical examples}

In this section, we present a few examples to illustrate the performance of the proposed methods and make some comparisons for different strategies. In each example we seek a function representation approximation to a model output given a set of uncertain parameters with a known range or distribution. In Section \ref{Numerical-result1}, we use an example to illustrate the performance of improved least angle regression algorithm and the hierarchical sparse low rank tensor approximation (HSLRT), and demonstrate  the advantages of FILARS and HSLRT over  the ordinary least-squares (OLS) and orthogonal matching pursuit (OMP), which are well-known and widely used.  In Section \ref{Numerical-result2}, we consider a multivariate function dependent on the random variables $\bm{\xi}$ and the spatial variables $x$  to present the performance of novel variable-separation (NVS),  where we use FILARS to get a good approximation of $\zeta_i(\bm{\xi})$ in (\ref{separat-presentation}) for each $i=1,\cdots,N$.
In Section \ref{Numerical-result3}, an elliptic PDE with a high dimensional parameter is considered. NVS is used to get an approximation in the form (\ref{separat-presentation}) for the solution to  the elliptic PDE, then HSLRT is used to uncouple the dependence between different terms $\{\zeta_i(\bm{\xi})\}_{i=1}^{N}$ for uncertainty quantification.

\subsection{Rastrigin function}
\label{Numerical-result1}
In this subsection, we consider a function example to illustrate the performance of the proposed numerical algorithms in this work.
Since the wavelet basis is  able to  simultaneously capture the global and local features, it is a good choice for the approximation of oscillating functions.
%\ref{}.
In this example, we construct an approximation of the  Rastrigin function with $6$ variables by solving three different optimization problems: the ordinary least-squares problem, $l_0$-norm optimization  and $l_1$-norm optimization. The orthogonal polynomials basis functions are used in the approximation. The Rastrigin function is given by
\[
w(\bm{\xi})=60+\sum_{i=1}^6\big(\xi_i^2-10\cos{(2\pi\xi_i)}\big),
\]
where $\xi_1,\cdots,\xi_6 $ are independent random variables and  uniformly distribute in $[-1, 1]$. We note that the number of random sample evaluations required by OLS scales quadratically with the number $\mathbf{N}+1$ of the polynomial basis (Legendre polynomials for this example), i.e., $M \thicksim (\mathbf{N} +1)^2$, where $M$ is the number of samples. If the solution of OLS problem is sparse,  a good approximation can be obtained by solving the $l_0$-norm optimization problem (\ref{sparse-optimization-0}) with only $M \ll \mathbf{N}$ random samples. Here we  use OMP to solve the $l_0$-norm optimization problem. As we know, $l_0$-norm optimization problem is an NP-hard problem, it is will cost too much to solve the $l_0$-norm optimization problem when the number of orthogonal polynomials basis functions is too large. Since the $l_0$-norm optimization problem (\ref{sparse-optimization-0}) can be reasonably approximated by the $l_1$-norm optimization problem under certain conditions, we solve the $l_1$-norm convex optimization problem instead of $l_0$-norm optimization problem to approximate the solution. In this subsection, the ILARS introduced in Section \ref{mlars} is used to solve the $l_1$-norm optimization problem, then the fast leave-one-out method is used to determine the optimal ILARS solution (FILARS).
In order to make comparison for all methods, we also apply the hierarchical sparse low rank tensor approximation  (HSLRTA) method to this example. For HSLRTA, we split $\bm{\xi}=(\xi_1,\cdots, \xi_{6})$ into $3$ mutually independent sets $\{\bm{\xi}_i=(\xi_{2i-1}, \xi_{2i})\}_{i=1}^3$ of random variables, then the largest level number is $2$ according to Section \ref{HSLRTA}. Here we solve the $l_1$-norm optimization problem (\ref{sparse-optimization-1}) by solving several sub-optimization problems,   whose dimension of the random variables is only $2$.

\begin{figure}[htbp]
\centering
  \includegraphics[width=3.2in, height=2.5in]{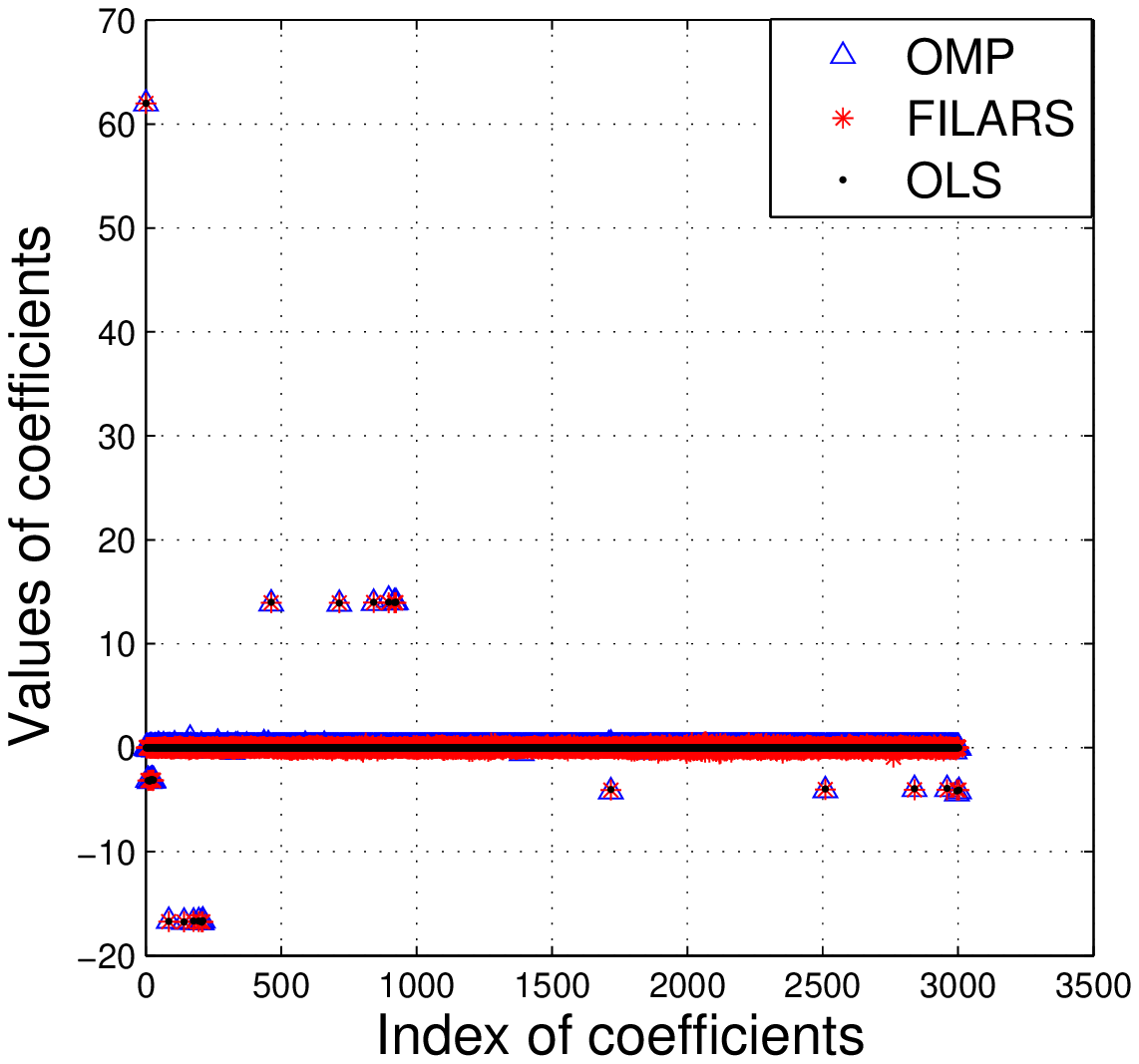}
  \includegraphics[width=3.2in, height=2.5in]{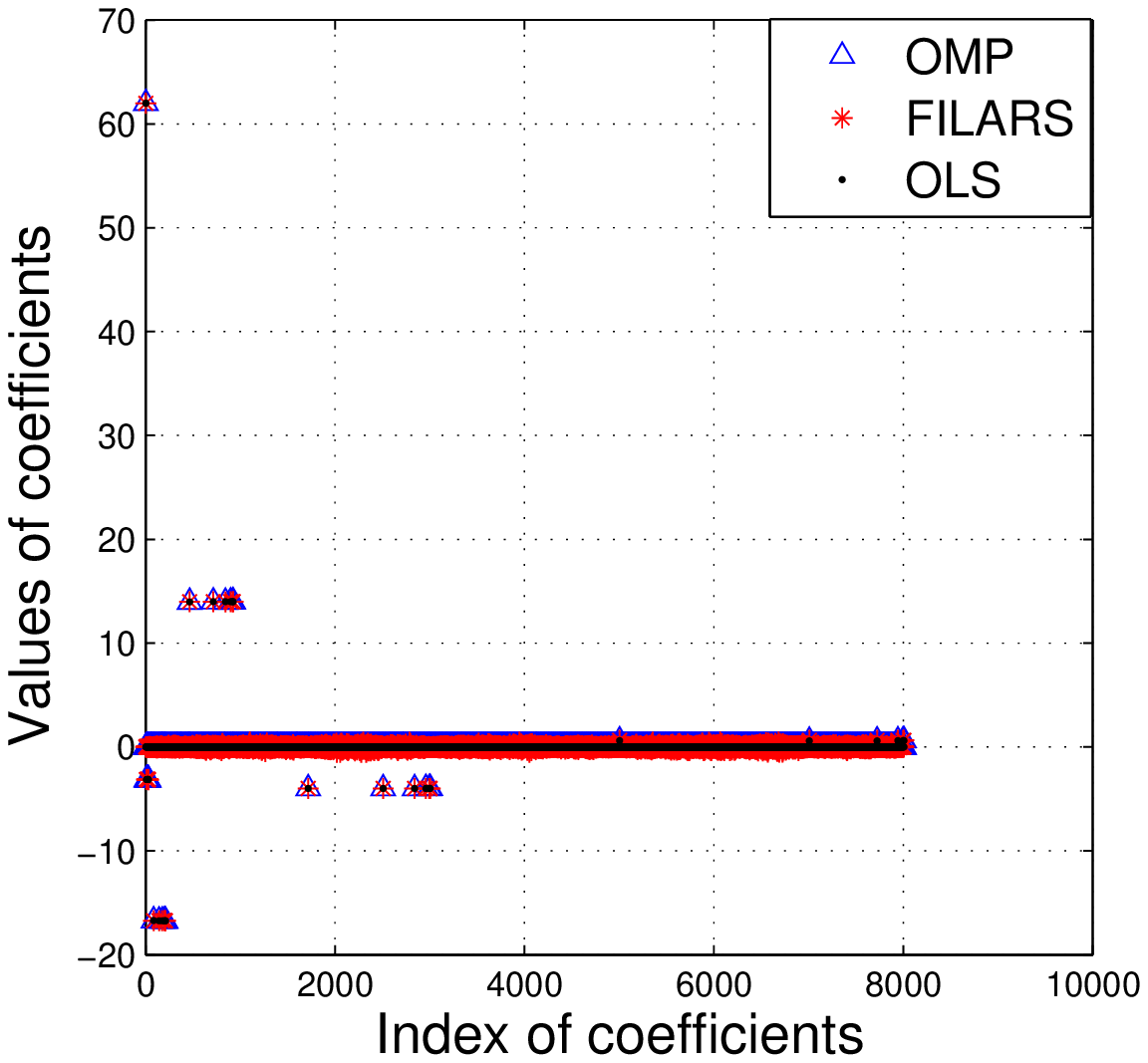}
  \caption{Comparison of coefficients corresponding to the Legendre polynomial basis function using OLS, OMP, and FILARS  for the $6$ dimensional Rastrigin function, the maximal polynomial degrees are $p=8$ (left) and $p=10$ (right), respectively.}
  \label{fig1-exam1}
\end{figure}

\begin{table}[hbtp]
%\small
\centering
\caption{ Comparison of the relative mean errors $\varepsilon$, the number of samples $M$, and the offline CPU time for different approaches (OLS, OMP, FILARS and HSLRTA) and different polynomial degrees $p$.}
\begin{tabular}{c|c|c|c|c}
%\hline
\Xhline{1pt}
  Strategies & $p$ & $\varepsilon$ & offline CPU time & M\\
  \hline
  \multirow{3}{*}{OLS}
                & $8$ & $7.60\times10^{-3}$ & $1.1830~s$ & $7000$\\
                & $10$ & $2.90\times10^{-3}$ & $5.9602~s$& $15000$ \\
                & $12$ & $2.33\times10^{-4}$ & $44.0064~s$ & $22000$\\
 \hline
 \multirow{3}{*}{OMP}   & $8$ & $7.30\times10^{-3}$ & $0.8005~s$& $320$ \\
                        & $10$ & $6.00\times10^{-4}$ & $2.1443~s$& $500$ \\
                        & $12$ & $3.09\times10^{-5}$ & $6.3213 ~s$ & $620$\\
 \hline
 \multirow{3}{*}{ILARS}
                & $8$ & $4.90\times10^{-3}$ & $0.2216 ~s$& $320$ \\
                & $10$ & $4.00\times10^{-4}$ & $0.7370 ~s$ & $500$\\
                & $12$ & $2.41\times10^{-5}$ & $3.3276 ~s$ & $620$\\
                \hline
 \multirow{3}{*}{HSLRTA}
                & $12$ & $1.04\times10^{-4}$ & $0.4719 ~s$& $1500$ \\
                & $14$ & $6.30\times10^{-5}$ & $0.1739 ~s$ & $1150$\\
                & $16$ & $1.08\times10^{-5}$ & $0.1223 ~s$ & $1300$\\
\Xhline{1pt}
\end{tabular}
\label{tab1-exam1}
\end{table}

Figure \ref{fig1-exam1} plots the coefficients corresponding to the Legendre polynomial basis function of the rastrigin function using OLS, OMP, and FILARS, respectively. From Figure \ref{fig1-exam1}, we find that: (1) the dominant coefficients are concentrated in the  terms of Legendre polynomials basis functions with lower degree; (2) for this example, $l_1$-norm optimization problem (\ref{sparse-optimization-1}) gives almost the same solution as  the $l_0$-norm optimization problem (\ref{sparse-optimization-0}), and the non-zero coefficients can be accurately sought out by the FILARS. For these methods, they use different numbers of samples for function evaluation to get the approximation.  The numbers of samples are listed in Table \ref{tab1-exam1}.

The relative mean errors $\varepsilon$, the number of samples $M$ for function evaluation, and the offline CPU time for different approaches (OLS, OMP, FILARS and HSLRTA) and different polynomial degrees $p$ are listed in Table \ref{tab1-exam1}.
The offline CPU time for FILARS consists of  two parts: one  is from ILARS, the other is from finding  optimal ILARS solution using fast leave-one-out method. The relative mean errors $\varepsilon$  is defined by
\[
\varepsilon=\frac{1}{N}\sum_{i=1}^N\frac{|w(\bm{\xi}_i)-\hat{w}(\bm{\xi}_i)|}{|w(\bm{\xi}_i)|},
\]
where $N=1000$ is the number of samples used to compute the mean error, and  $\hat{w}(\bm{\xi})$ is the approximation of $w(\bm{\xi})$ obtained by OLS, OMP, MFLARS, or HSLRTA.  From Table \ref{tab1-exam1}, we can see:  (1) as the polynomial degree $p$ increases, the relative mean error becomes smaller, and it will need more offline CPU time and more samples to get the approximation for these methods; (2) for HSLRTA method, as the polynomial degree $p$ increases, the approximation error and the offline CPU time used to construct the approximation steadily decay, and the number of samples keep relatively stable; (3) the solution of $l_0$-norm optimization problem (\ref{sparse-optimization-0}) can be approximated well by solving the $l_1$-norm optimization problem for this example.

\subsection{A multivariate function dependent on both spatial variable and random parameter}
\label{Numerical-result2}
In this section, we consider a multivariate function with both  spatial variable and random parameter to illustrate the performance of NVS. We use  FILARS to approximate $\{\zeta_i(\bm{\xi})\}_{i=1}^{N}$ and get the surrogates $\{\hat{\zeta}_i(\bm{\xi})\}_{i=1}^{N}$ for  online computation.

We consider the function defined by
\[
G(x,\bm{\xi}):=\frac{1}{\exp(x_1\xi_1/2+x_2\xi_2/2+x_1x_2\xi_3/2)},  ~~x \in D=(0,1)^2,
\]
where the random vector $ \bm{\xi}:=(\xi_1, \xi_2,\xi_3)\in \mathbb{R}^{3}$ and $\xi_i\sim N(0, 1)$  ($i=1,2,3$), i.e., normal distribution with zero mean and unit variance.
For the discretization of the spatial domain, we use $50\times50$ uniform grid.   The NVS in Algorithm \ref{algorithm-pgddf} is used to get the approximation in the variable-separation  form (\ref{separat-presentation}). In order to get the mutually independent surrogates $\{\hat{\zeta}_i(\bm{\xi})\}_{i=1}^{N}$ for $\{\zeta_i(\bm{\xi})\}_{i=1}^{N}$,  we use FILARS to obtain an accurate approximation of $\zeta_i(\bm{\xi})$ for each $i=1,\cdots,N$. To approximate the random parameter space, we use Hermite polynomial basis functions with total degree up to $N_g=9$, thus the number of the total basis functions is $220$.  With regard to FILARS, we take $1000$ samples to get approximation for $\zeta_i(\bm{\xi})$ ($i=1,\cdots,N$).

\begin{figure}[htbp]
\centering
  \includegraphics[width=4in, height=2.5in]{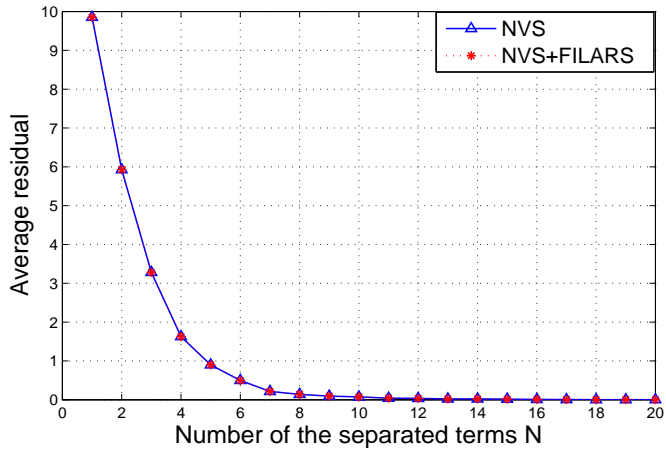}
  \caption{Comparison of the average residual corresponding to the different numbers of the separated terms $N$ for NVS method and NVS with FILARS method.}
  \label{fig2-exam1}
\end{figure}

\begin{figure}[htbp]
\centering
  \includegraphics[width=4in, height=2.5in]{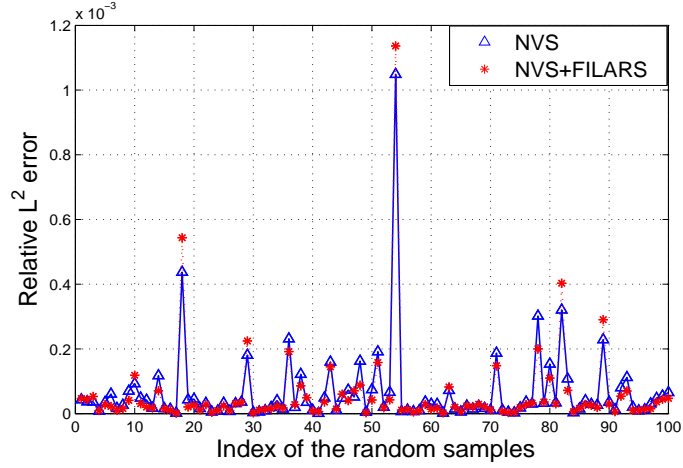}
  \caption{The relative $L^2$-error by NVS method and NVS with FILARS method with the number of the separated terms $N$ being 20.}
  \label{fig3-exam1}
\end{figure}

\begin{figure}[htbp]
\centering
  \includegraphics[width=3in, height=2.2in]{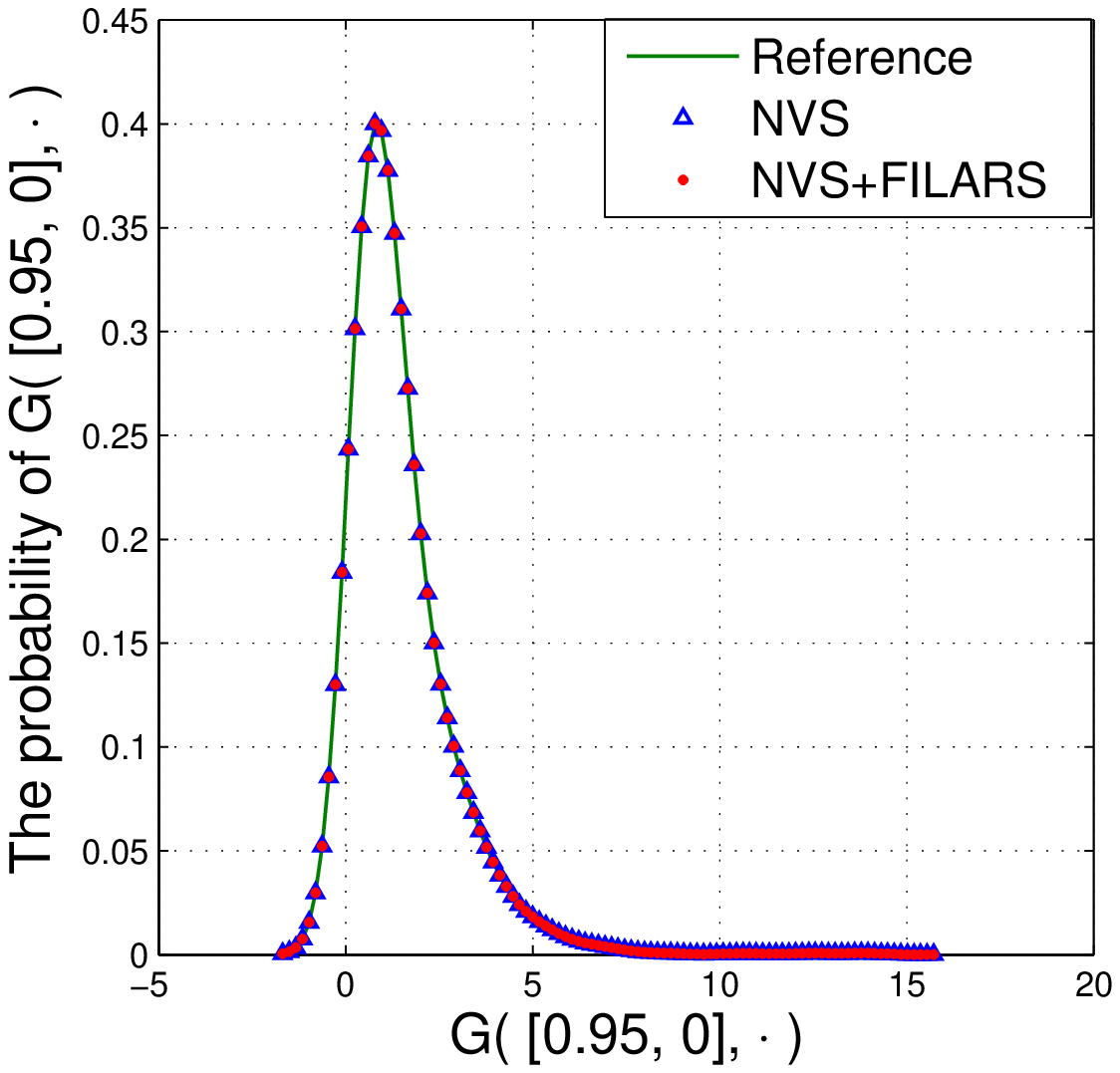}
  \includegraphics[width=3in, height=2.2in]{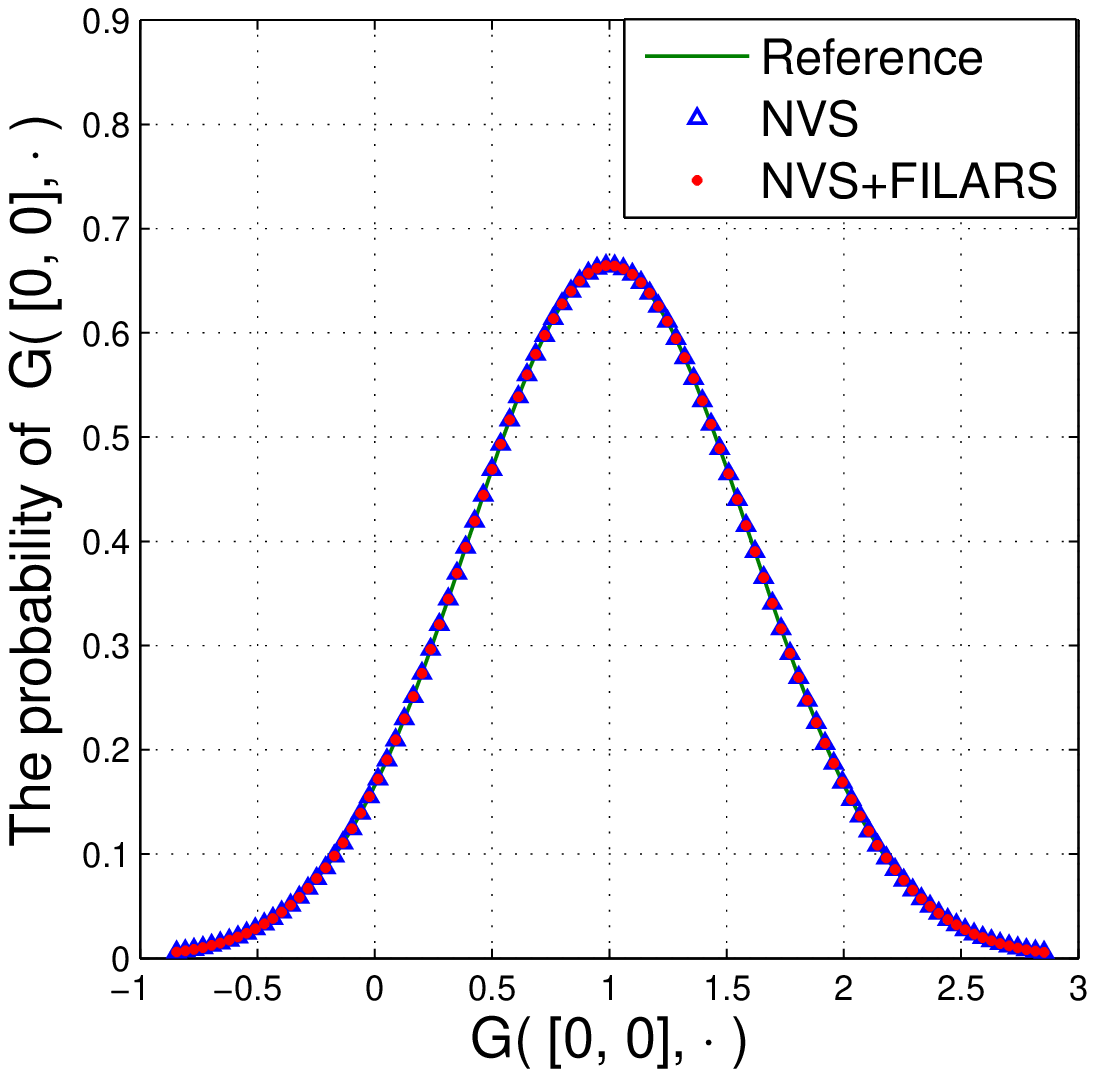}
  \caption{Probability density of the scalar $G(x_0,\bm{\xi})$ for the reference, NVS and NVS with FILARS method, where the variance of $G(x_0,\bm{\xi})$ is maximal (left) or minimal (right) for all $x \in D$, the number of the separated terms $N=20$.}
  \label{fig4-exam1}
\end{figure}

\begin{table}[hbtp]
\small
\centering
\caption{ Comparison of relative mean errors $\varepsilon$ for different approaches with the number of the separated terms $N$ being 20.}
\begin{tabular}{c|c|c}
%\hline
\Xhline{1pt}
  Strategies & NVS+ILARS & NVS \\
  \hline
 $\varepsilon$ & $8.68\times10^{-5}$ & $7.28\times10^{-5} $ \\
 \hline
 Online CPU time per sample & $0.0021 s$ & $0.3545 s$ \\
\Xhline{1pt}
\end{tabular}
\label{tab2-exam1}
\end{table}

 Based on these representations, $10000$ samples $\{\bm{\xi}^{(i)}\}_{i=1}^{10000}$ are chosen to compute the average relative error. We define the average relative error as follows,
 \begin{eqnarray*}
\label{errors-u}
\varepsilon=\frac{1}{N}\sum_{i=1}^N\frac{\|G(x,\bm{\xi}^{(i)})-\tilde{G}(x,\bm{\xi}^{(i)})\|_{L^2}}{\|G(x,\bm{\xi}^{(i)})\|_{L^2}},
\end{eqnarray*}
where $N=10000$ and $\tilde{G}(x,\bm{\xi}^{(i)})$ is the approximation by NVS or NVS with FILARS. Let $r_k$ be defined by (\ref{pgd-residual}).
 In Figure \ref{fig2-exam1}, we depict the average residual $\|r_k\|_{\mathcal{V} \otimes \mathcal{S}}^2$ versus number of the separated terms $N$ for NVS method and NVS with FILARS method based on $500$ random samples. By the figure we have two observations: (1) the average residual becomes  smaller when the number of separated terms $N$ increase for the both two methods; (2) the curves of the average residual are nearly identical for the two methods. This implies that FILARS can provide an accurate approximation for random functions $\{\hat{\zeta}_i(\bm{\xi})\}_{i=1}^{N}$.

We list the average relative errors and the average online CPU time in Table \ref{tab2-exam1}. From the table, we can see: (1) Both NVS and NVS with FILARS can achieve a good approximation under the separated form; (2) the average CPU time per sample by NVS with FILARS is much smaller than that by NVS, and the approximation obtained by FILRS achieves a good trade-off in both approximation accuracy.

We plot the relative errors for the two methods in Figure \ref{fig3-exam1} to visualize the individual relative errors of the first $100$ samples. By the figure, we can see that both the two methods give good approximations.

Based on the $10000$ random samples, the probability density estimates of $G(x,\bm{\xi})$ at a single measurement location are shown in Figure \ref{fig4-exam1}. From the figure, we find that both NVS and NVS with FILARS can give the good approximations for the probability density.

\subsection{An elliptic PDE with high-dimensional random variables}
\label{Numerical-result3}

Let $k(x, \bm{\xi}): D\times \Omega \longrightarrow \mathbb{R}$ be a diffusion coefficient function.  We consider the following model elliptic equation for numerical computation,
\begin{eqnarray}
\label{exp2}
\begin{cases}
\begin{split}
-\text{div}\big(k(x,\bm{\xi})\nabla u(x, \bm{\xi})\big)& = f(x, \bm{\xi}) \quad \text{in} \quad D\times \Omega,\\
u(x,\bm{\xi})& = 0 \quad  \text{on  }  \Gamma_1,\\
k(x,\bm{\xi})\nabla u(x, \bm{\xi})\cdot \mathbf{n} & = 0 \quad \text{on  other  boundaries},
\end{split}
\end{cases}
\end{eqnarray}
where the source term $f(x, \bm{\xi})$ is defined by
\[
f(x,\bm{\xi})=2\exp(x_1+x_2+3)\sin(\xi_1\xi_{32}).
\]
Here the physical domain $D=(0,1)^2$, Dirichlet boundary  $\Gamma_1=(0, 1)\times1$, and $\mathbf{n}$ denotes the outward unit normal vector on $\partial D\setminus \Gamma_1$.
The diffusion coefficient $k(x, \bm{\xi})$ is a random field, which is characterized by  a two point exponential  covariance function $\text{cov}[k]$,  i.e.,

\begin{eqnarray*}
\label{gRB basis}
\begin{split}
\text{cov}[k](x_1, y_1; x_2, y_2)= \sigma^2 \exp\big(-\frac{|x_1-x_2|^2}{2l_x^2}-\frac{|y_1-y_2|^2}{2l_y^2}\big),
\end{split}
\end{eqnarray*}
where $(x_i, y_i)$ ($i=1,2$) is the spatial coordinate in $D$. Here the variance $\sigma^2=3$, correlation length $l_x=l_y=0.5$. The random coefficient  $k(x, \bm{\xi})$ is obtained by truncated by a Karhunen-Lo\`{e}ve expansion, i.e.,
\begin{eqnarray}
\label{KLE-TRUNC}
\begin{split}
k(x,\bm{\xi}):= E[k]+ \sum_{i=1}^{32} \sqrt{\gamma_i}b_i(x) \xi_i.
\end{split}
\end{eqnarray}
 Here $E[k]=8$ and the random vector $ \bm{\xi}:=(\xi_1, \xi_2, ...,\xi_{32})\in \mathbb{R}^{32}$. Each $\xi_i$  ($i=1,\cdots, 32$) is uniformly distributed in the interval $[-1,1]$.
%\begin{figure}[htbp]
%\centering
%  \includegraphics[width=2.3in, height=2in]{sdomain}
%  \caption{Spatial domain and domain boundaries.}
%  \label{fig11-exam3}
%\end{figure}
For the partition of spatial domain, $100\times 100$ grid is used to compute the reference solution and solve equation (\ref{SPDE-strong-st-red}) to get $\{g_i(x)\}_{i=1}^N$. Hence, the degree of freedom  is $N_f=6241$  for FEM. We apply the novel variable-separation (NVS) method to get the variable separation representation of the solution for the elliptic PDE (\ref{exp2}). Then $\{\zeta_i(\bm{\xi}), g_i(x)\}_{i=1}^N$ are obtained by Algorithm \ref{algorithm-pgddf}, we note that $\{\zeta_i(\bm{\xi})\}_{i=1}^N$ are determined by equation (\ref{exi}). Since $\zeta_k(\bm{\xi})$ depends on $\{\zeta_i(\bm{\xi})\}_{i=1}^{k-1}$, which will impact on the computation efficiency, and bring great challenge for numerical simulation as $N$, i.e., the number of terms for (\ref{separat-spde}) increase. To overcome the difficulty, we  apply FILARS and HSLRTA to construct the surrogates
$\{\hat{\zeta}_i(\bm{\xi})\}_{i=1}^{N}$ for $\{\zeta_i(\bm{\xi})\}_{i=1}^{N}$, where $\{\hat{\zeta}_i(\bm{\xi})\}_{i=1}^{N}$ are mutually independent.  In order to reduce the high dimensionality difficulty of the random parameter, we use HSLRTA to decompose a high-dimensional problem  into a few low-dimensional problems.     To this end,
we split $\bm{\xi}=(\xi_1,\cdots, \xi_{32})$ into $4$ mutually independent sets  $\{\bm{\xi}_i=(\xi_{8i-7}, \cdots,  \xi_{8i})\}_{i=1}^4$ of random variables. Thus, the largest level number is $3$ according to Section \ref{HSLRTA}.  To approximate the random parameter space, we use Legendre polynomial basis functions with total degree up to $N_g=5$. If the $l_1$-norm optimization problem (\ref{sparse-optimization-1}) is solved by OLS or FILARS directly, the the total basis functions is up to $435897$. To circumvent the issue caused by the large number of basis functions, we use HSLRTA and solve the $l_1$-norm optimization problem (\ref{sparse-optimization-1}) through  solving some sub-optimization problems with only $8$  random variables.

\begin{table}[hbtp]
\small
\centering
\caption{ Comparison of the average residual corresponding to the different numbers of the separated terms $N$ for NVS method and NVS with HSLRTA method.}
\begin{tabular}{c|c|c}
%\hline
\Xhline{1pt}
 Number of the separated terms $N$ & NVS+HSLRTA & NVS\\
  \hline
 $N=1$ & $67.7055$ & $67.7055$ \\
 \hline
 $N=2$ & $0.1662 $ & $0.1660$ \\
 \hline
 $N=3$ & $0.0420$ & $0.0433$ \\
 \hline
 $N=4$ & $0.0514$ & $0.0481$ \\
 \hline
 $N=5$ &  $0.0056$ & $0.0054$ \\
\Xhline{1pt}
\end{tabular}
\label{tab1-exam2}
\end{table}

\begin{table}[hbtp]
\small
\centering
\caption{ Comparison of relative mean errors $\varepsilon$ for different approaches with the number of the separated terms $N$ being 5.}
\begin{tabular}{c|c|c|c}
%\hline
\Xhline{1pt}
  Strategies & FEM & NVS+HSLRTA & NVS \\
  \hline
 $\varepsilon$ & & $6.50\times10^{-3}$ & $6.40\times10^{-3}$ \\
 \hline
 online CPU time per sample & $2.8947 s$ & $0.0013 s$ & $0.3376 s$ \\
\Xhline{1pt}
\end{tabular}
\label{tab2-exam2}
\end{table}

\begin{figure}[htbp]
\centering
  \includegraphics[width=4in, height=2.5in]{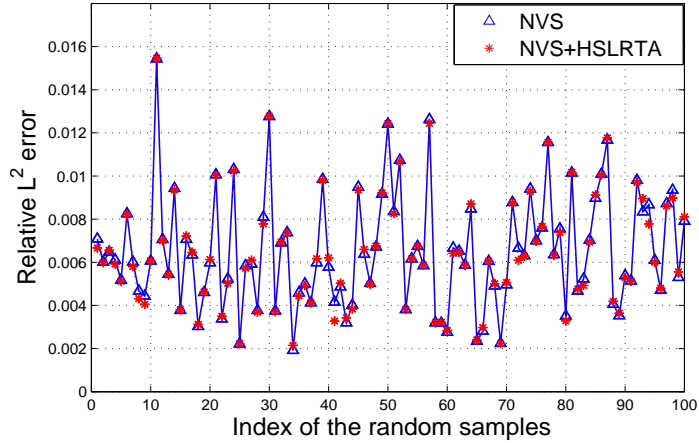}
  \caption{The relative $L^2$-error for $100$ random samples by NVS method and NVS with HSLRTA method with the number of the separate terms $N$ being 5.}
  \label{fig1-exam3}
\end{figure}

\begin{figure}[htbp]
\centering
 \includegraphics[width=2.1in, height=1.5in]{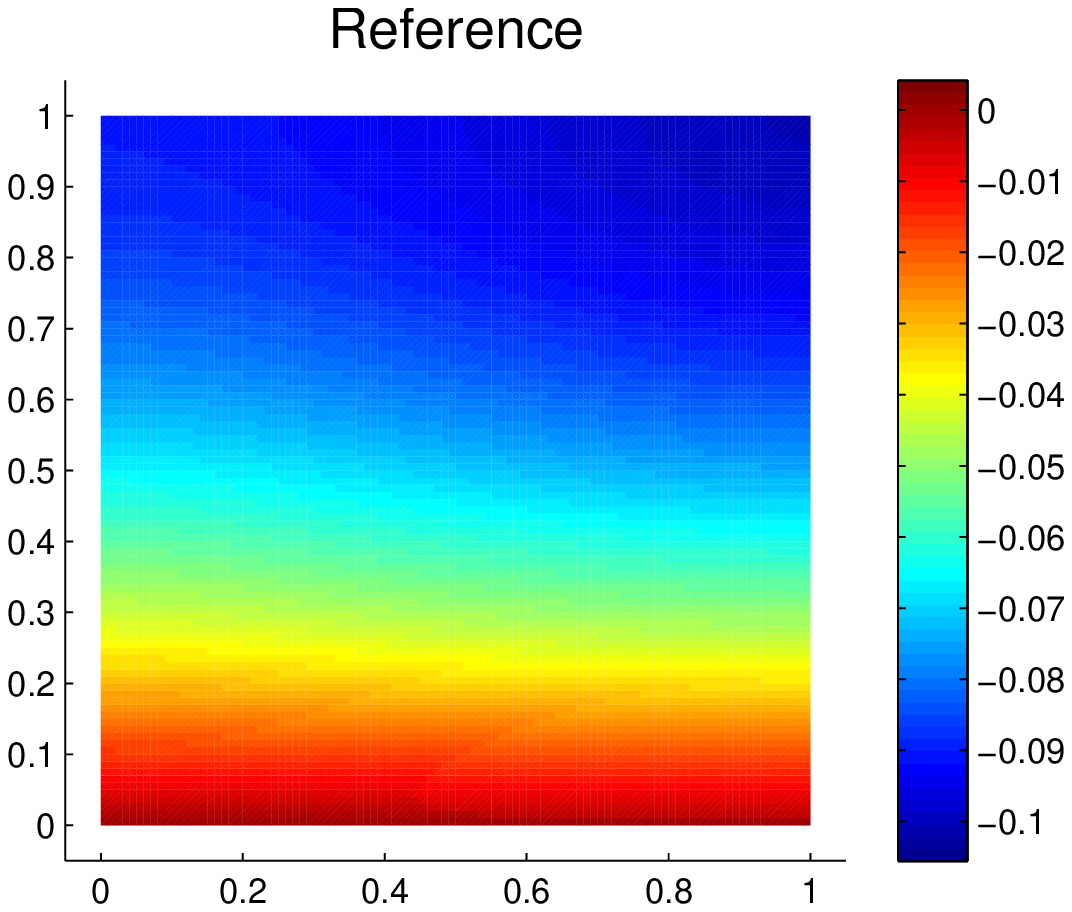}
  \includegraphics[width=2.1in, height=1.5in]{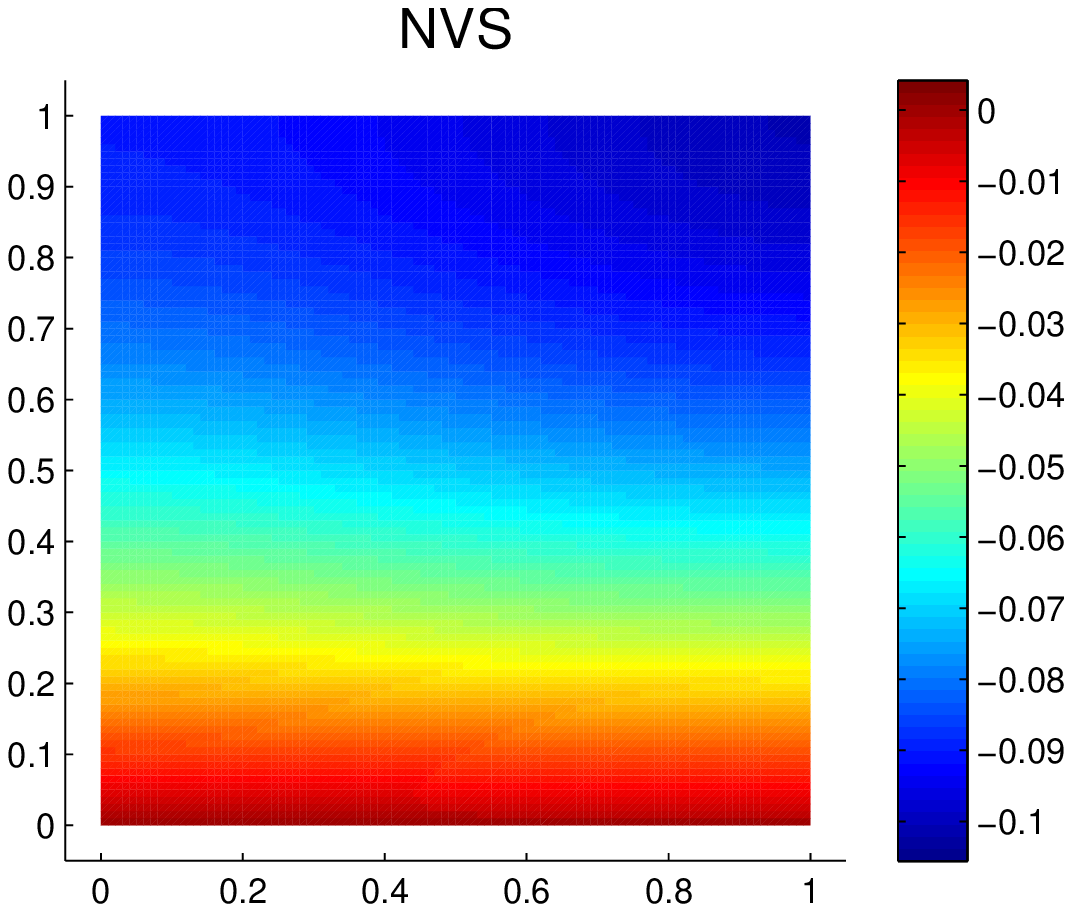}
  \includegraphics[width=2.1in, height=1.5in]{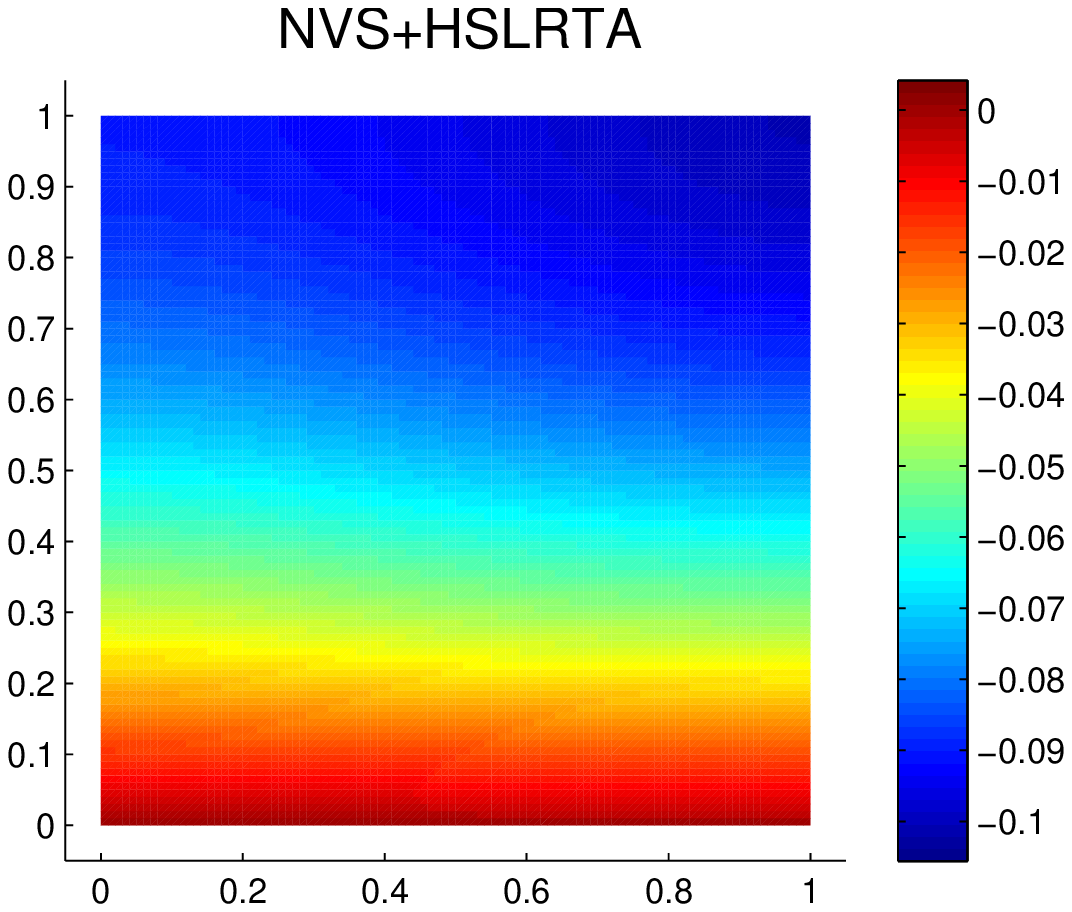}\\
  \includegraphics[width=2.1in, height=1.9in]{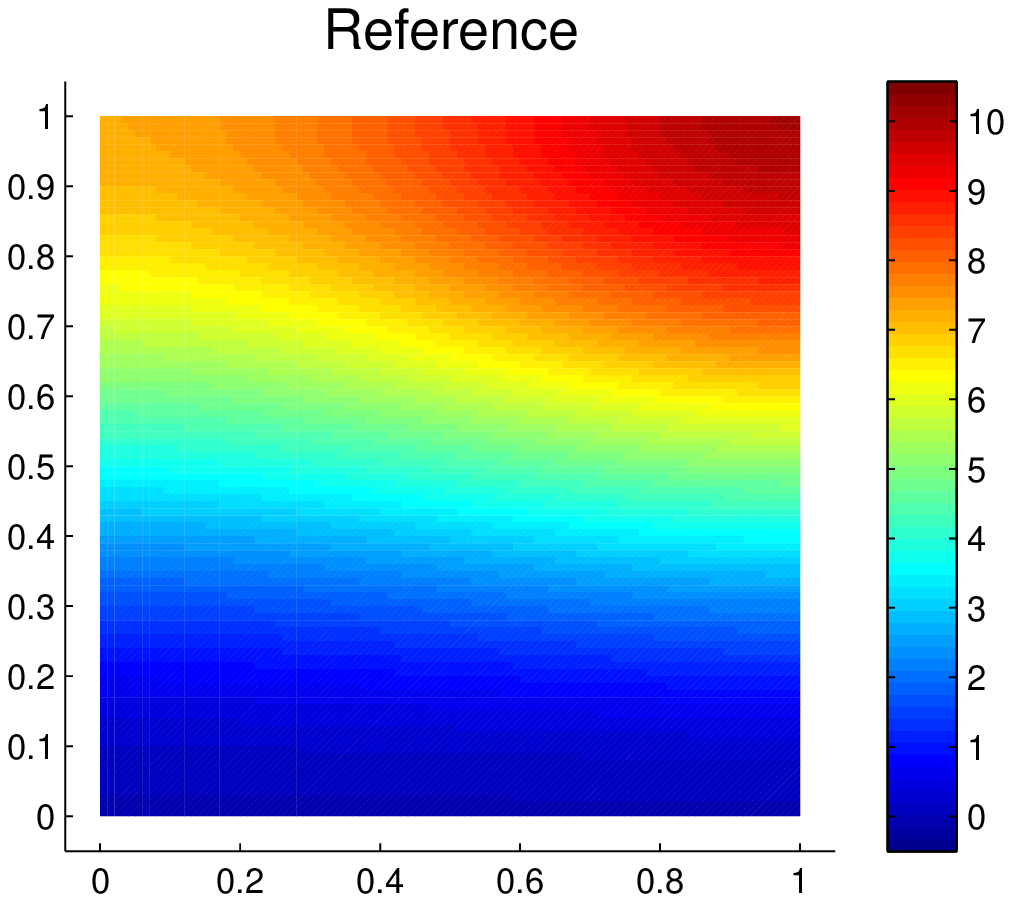}
  \includegraphics[width=2.1in, height=1.9in]{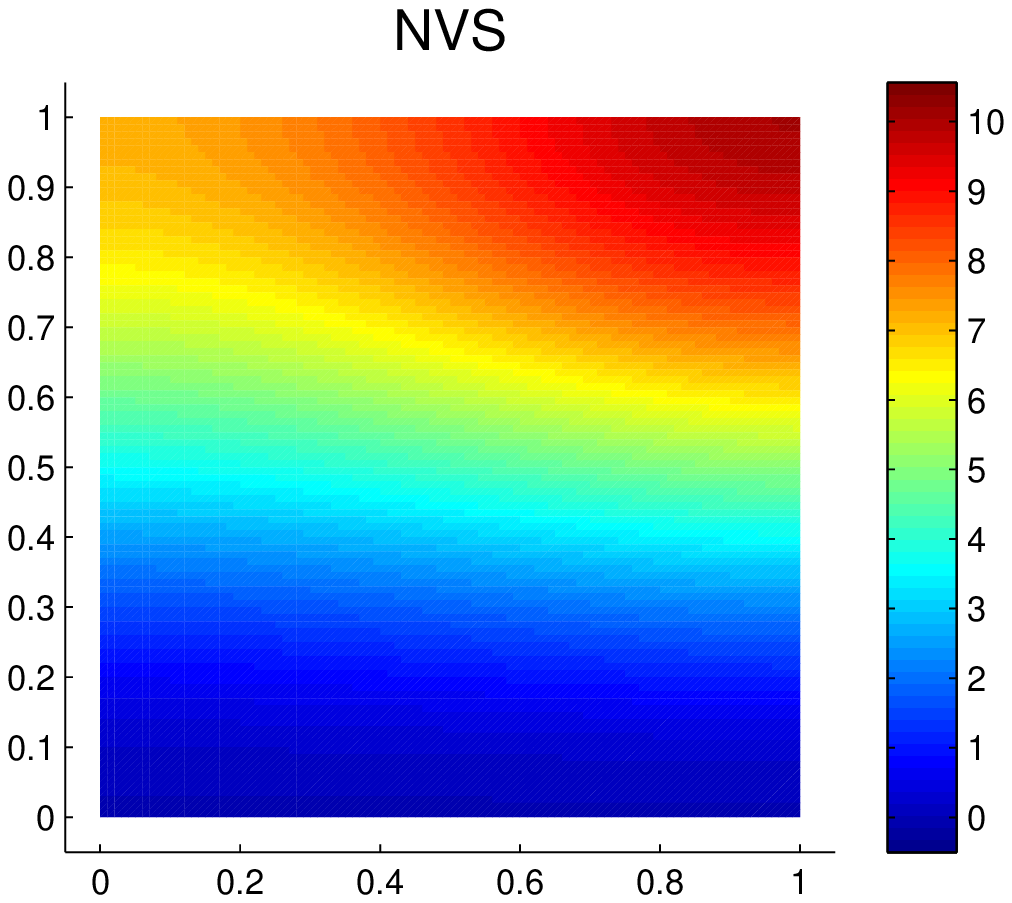}
  \includegraphics[width=2.1in, height=1.9in]{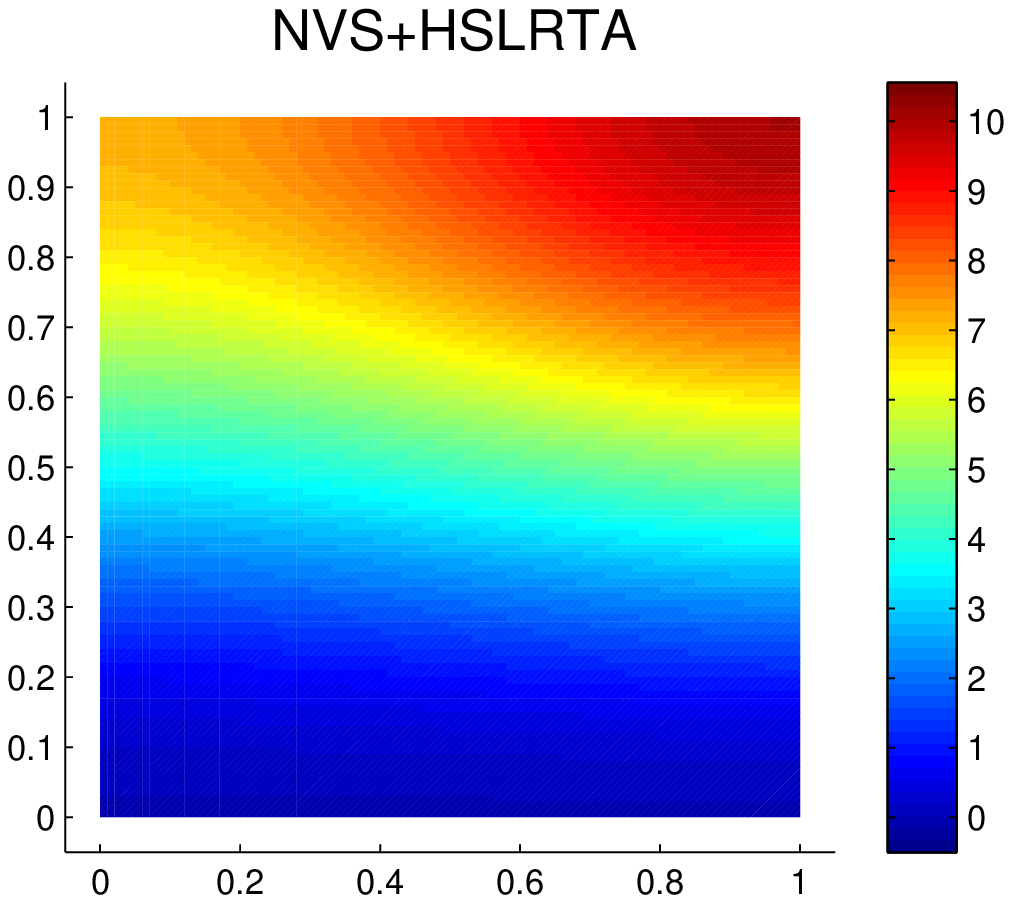}
\caption{The mean and variance of the solution $u(x,\bm{\xi})$ profiles for different methods, and the number of the separate terms is $N=5$, the first row are the mean profiles and the second row are the variance profiles.}
  \label{fig2-exam3}
\end{figure}

\begin{figure}[htbp]
\centering
  \includegraphics[width=3in, height=2.2in]{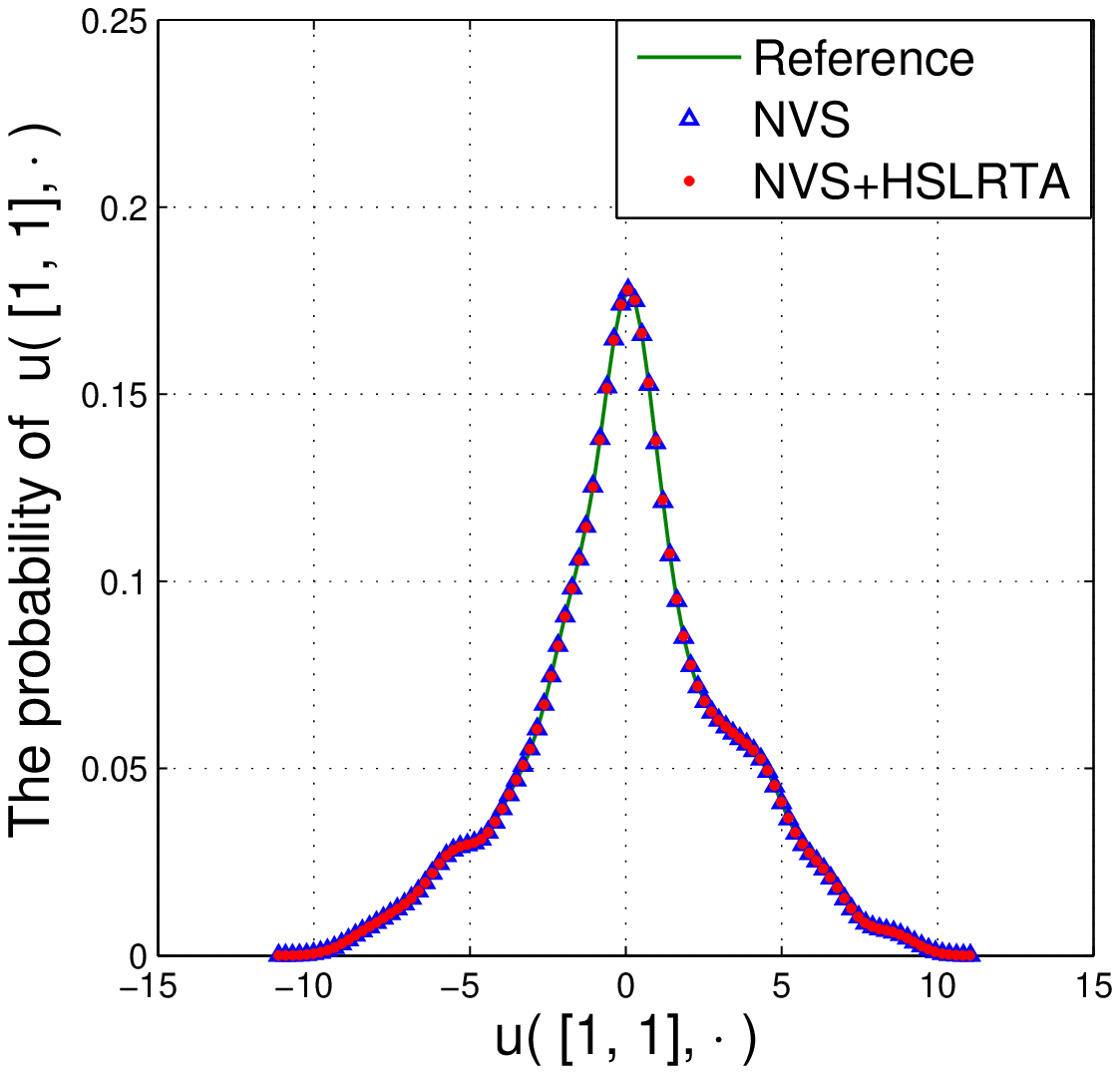}
  \includegraphics[width=3in, height=2.2in]{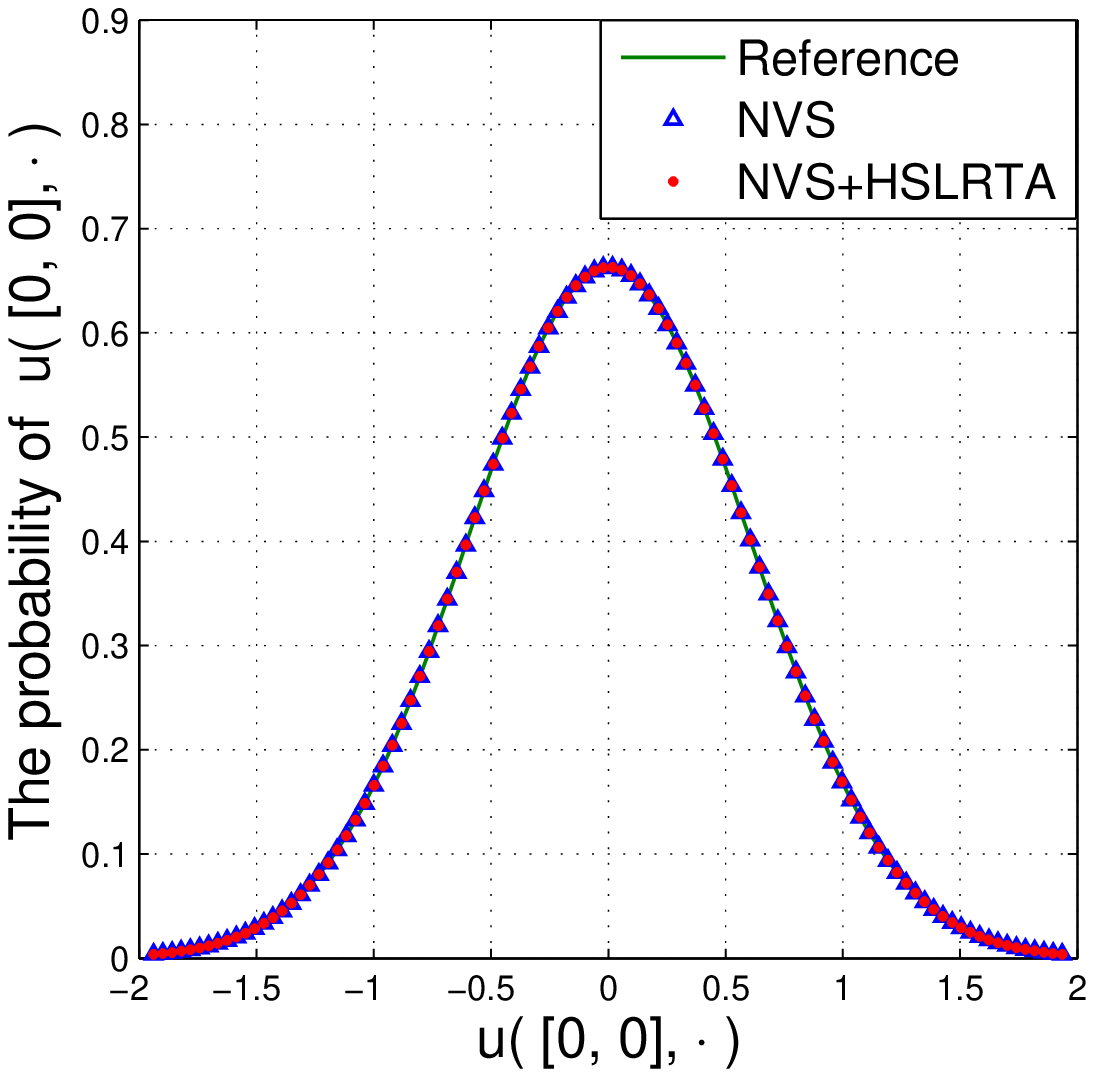}
  \caption{Probability density of  $u(x_0,\bm{\xi})$ for reference, NVS and NVS with HSLRTA, where the variance of $u(x_0,\bm{\xi})$ is maximal (left) or minimal (right) for all $x \in D$, the number of the separated terms $N=5$.}
  \label{fig3-exam3}
\end{figure}

Based on these representations, we choose $10^5$ samples $\{\bm{\xi}^{(i)}\}_{i=1}^{10^5}$ and compute the average relative error, which is defined as follows,
 \begin{eqnarray*}
\label{errors-u}
\varepsilon_u=\frac{1}{N}\sum_{i=1}^N\frac{\|u(x,\bm{\xi}^{(i)})-\tilde{u}(x,\bm{\xi}^{(i)})\|_{L^2}}{\|u(x,\bm{\xi}^{(i)})\|_{L^2}},
\end{eqnarray*}
where $N=10^5$ and $\tilde{u}(x,\bm{\xi}^{(i)})$ is the solution by NVS or NVS with HSLRTA, and $u(x,\bm{\xi}^{(i)})$ is the reference solution solved by FEM on the $100\times 100$ grid.

In Table \ref{tab1-exam2}, we depict the average residual versus number of the separated  terms $N$ for NVS method and NVS with HSLRTA method based on $500$ random samples. Here the average residual is defined by (\ref{SPDE-strong-st-red}) with $v=e(\bm{\xi})$.
By the table,  we can find that as the number of separated terms $N$ increase, the average residual becomes  smaller  for the both two methods. This implies a good approximation for random functions $\{\hat{\zeta}_i(\bm{\xi})\}_{i=1}^{N}$ using  HSLRTA. We list the average relative errors in Table \ref{tab2-exam2} along with the average online CPU time based on $100000$ random samples. From the table, we find that: (1) Both NVS and NVS with HSLRTA can provide  very good approximations; (2) the approximation obtained by HSLRTA achieves a good trade-off in both approximation accuracy and computation efficiency, the average CPU time per sample by NVS with HSLRTA is much smaller than that by NVS and FEM.

Figure \ref{fig2-exam3} demonstrates the mean and variance of solution $u(x,\bm{\xi})$ profiles for different methods. By the figure, we find that: (1) the mean profiles for the three methods are all nearly identical; (2) there is no clear difference for the variance profiles among the reference solution, NVS solution and the solution by NVS with HSLRTA.
To visualize the individual relative errors of the first $100$ samples, we plot the relative errors for the two methods in Figure \ref{fig1-exam3}, which shows that both the two methods have good agreement.

The probability density estimate of $u(x,\bm{\xi})$ based on $10^5$ random samples at a single measurement location are shown in Figure \ref{fig3-exam3}. From the figure, we can see that both NVS and NVS with HSLRTA gives  the same probability density as the reference probability density.

\section{Conclusions}
\label{ssec:Conclusions}
In the paper, we proposed a novel variable-separation (NVS) method to get a representation for multivariable functions. To achieve offline-online computation decomposition, NVS can be used to get a affine representation for model's inputs. NVS shared the merits with the EIM widely used for variable separation, but NVS is easier to implement than EIM. Firstly, the optimal parameter values and interpolation nodes are not necessary for NVS. In addition, we can compute the approximation directly by the separated representation instead of solving an algebraic system based on the optimal parameter values and interpolation nodes for the online computation, which is required for EIM.
We developed the novel variable-separation method to represent the solution in the tensor product structure for stochastic partial differential equations in high stochastic dimension. While dealing with the SPDEs in  high stochastic dimension spaces, NVS can circumvent the curse of dimensionality,  which results in the dramatic increase in the dimension of stochastic approximation spaces. Compared with proper generalized decomposition (PGD), NVS has no need to perform the suitable iterative scheme to compute $\zeta_i(\bm{\xi})$ and $g_i(x)$ at each enrichment step $i$. Since The mutual dependance of the stochastic functions $\{\zeta_i(\bm{\xi})\}_{i=1}^{N}$  would bring great challenge for numerical simulation especially when the number of terms $N$ for (\ref{separat-spde}) is large. We developed improved least angle regression algorithm (ILARS) and hierarchical sparse low rank tensor approximation method (HSLRTA) based on parse regularization to get the approximation $\hat{\zeta}_k(\bm{\xi})$ of $\zeta_k(\bm{\xi})$ such that  $\{\hat{\zeta}_i(\bm{\xi})\}_{i=1}^{N}$ are mutually independent. For ILARS, we gave the selection of the optimal regularization parameter at each step based on least angle regression algorithm (LARS) for lasso problems. This significantly improved the efficiency of   ILARS. HSLRTA was proposed to  construct an accurate approximation for high dimensional stochastic problems. We applied the proposed methods to a few numerical models with  random inputs. Careful numerical analysis was carried out for these numerical examples. In the future, we will apply the proposed methods to the nonlinear models and  explore rigorous convergence analysis.

%\appendix\section{proof of equation (\ref{rk-0})}
%\label{app1}

\end{document}